       \font\tenmsb=msbm10
       \font\sevenmsb=msbm7
       \font\fivemsb=msbm5
\let\amstexloaded@\relax\fi
       \def\spaces@{\space\space\space\space\space}
       \def\spaces@@{\spaces@\spaces@\spaces@\spaces@\spaces@}
       \def\space@.  {\futurelet\space@\relax}
       \def\Err@#1{\errhelp\defaulthelp@\errmessage{AmS-TeX error: #1}}
       \def\relaxnext@{\let\next\relax}
       \def\accentfam@{7}
       \def\noaccents@{\def\accentfam@{0}}
       \def\Cal{\relaxnext@\ifmmode\let\next\Cal@\else
       \def\next{\Err@{Use \string\Cal\space only in math mode}}\fi\next}
       \def\Cal@#1{{\Cal@@{#1}}}
       \def\Cal@@#1{\noaccents@\fam\tw@#1}
       \def\Bbb{\relaxnext@\ifmmode\let\next\Bbb@\else
       \def\next{\Err@{Use \string\Bbb\space only in math mode}}\fi\next}
       \def\Bbb@#1{{\Bbb@@{#1}}}
       \def\Bbb@@#1{\noaccents@\fam\msbfam#1}
\newtheorem{thm}{Theorem}[section]
\newtheorem{prop}[thm]{Proposition}
\newtheorem{lem}[thm]{Lemma}
\newtheorem{rem}[thm]{Remark}
\newtheorem{iteration lemma}[thm]{iteration Lemma}
\newtheorem{defn}[thm]{Definition}
\newtheorem*{acknowledgements*}{ACKNOWLEDGEMENTS}
\begin{document}

\setlength{\columnsep}{5pt}
\title{\bf Pseudo Core Inverses in Rings with Involution}
\author{Yuefeng Gao\footnote{ E-mail: yfgao91@163.com},
 \ Jianlong Chen\footnote{ Corresponding author. E-mail: jlchen@seu.edu.cn } \\
School of Mathematics, Southeast University, \\ Nanjing 210096,  China}
     \date{}

\maketitle
\begin{quote}
{\textbf{Abstract: }\small Let $R$ be a ring with involution. In this paper, we introduce a new type of generalized inverse called pseudo core inverse in $R$. The notion of core inverse was introduced by Baksalary and Trenkler for matrices of index 1 in 2010 and then it was generalized to an arbitrary $*$-ring case by Raki\'{c}, Din\v{c}i\'{c} and Djordjevi\'{c} in 2014. Our definition of pseudo core inverse extends the notion of core inverse to elements of an arbitrary index in $R$.  Meanwhile, it generalizes the notion of core-EP inverse, introduced by Manjunatha Prasad and Mohana for matrices in 2014, to the case of $*$-ring. Some equivalent characterizations for elements in $R$ to be pseudo core invertible are given and expressions are presented especially in terms of Drazin inverse and \{1,3\}-inverse.
Then, we investigate the relationship between pseudo core inverse and  other generalized inverses. Further, we establish several properties of the pseudo core inverse. Finally, the computations for pseudo core inverses of matrices are exhibited.

\textbf {Keywords:} {\small  Core inverse; Drazin inverse; $\{1,3\}$-inverse; Core-EP inverse; Pseudo core inverse}

\textbf {AMS Subject Classifications:} 15A09; 16W10}
\end{quote}

\section{ Introduction }\label{a}
Throughout this paper, $R$ always denotes a ring with involution; we say $*$-ring for short.
An involution $*$ in $R$ is an anti-isomorphism satisfying $$(a^{*})^{*}=a,~~(a+b)^{*}=a^{*}+b^{*},~~(ab)^*=b^*a^*~\text{for~all}~a, b \in R.$$


The Moore-Penrose inverse of $a\in R$, denoted by $a^{\dag}$, is the unique solution to the following Penrose equations
$$(1)~axa=a,~~(2)~xax=x,~~(3)~(ax)^{*}=ax,~~(4)~(xa)^{*}=xa.$$
$x$ satisfying equations $(1)$ and~$(3)$ is called a $\{1,3\}$-inverse of $a$, denoted by $a^{(1,3)}$.

The symbol $a^D\in R$ stands for the Drazin inverse of $a
\in R$, i.e., the unique element satisfying the following equations
$$a^ka^Da=a^k~\text{for some positive integer}~k,~~a^Daa^D=a^D,~~aa^D=a^Da.$$
The smallest positive integer $k$ satisfying above equations is called the Drazin index of $a$, denoted by $i(a)$. If $i(a)=1$, then the Drazin inverse of $a$ is called the group inverse of $a$ and is denoted by $a^{\#}$. One can see \cite{D1974} for a deep study of generalized inverses.

Baksalary and Trenkler \cite{D2010} introduced the notion of core inverse for complex matrices in 2010.
Later, Raki\'{c}, Din\v{c}i\'{c} and Djordjevi\'{c} \cite{DDD2014} generalized this notion to an arbitrary $*$-ring case. They proved that for $a\in R$, the core inverse of $a$ is the unique element $a^{\tiny\textcircled{\tiny \#}}$ satisfying the following five equations
$$aa^{\tiny\textcircled{\tiny \#}}a=a,~~a^{\tiny\textcircled{\tiny \#}}aa^{\tiny\textcircled{\tiny \#}}=a^{\tiny\textcircled{\tiny \#}},~~(aa^{\tiny\textcircled{\tiny \#}})^{*}=aa^{\tiny\textcircled{\tiny \#}},~~a^{\tiny\textcircled{\tiny \#}}a^2=a,~~ a(a^{\tiny\textcircled{\tiny \#}})^2=a^{\tiny\textcircled{\tiny \#}}.$$

Let us denote by $R^{\dag},~R^{\{1,3\}},~R^{D},~R^{\#}$~and~$R^{\tiny\textcircled{\tiny \#}}$~the set of all Moore-Penrose invertible, \{1,3\}-invertible, Drazin invertible, group invertible and core invertible elements in $R$, respectively.

Recently, Xu, Chen and Zhang \cite{DD2016} characterized the core invertible elements in $R$ in terms of three equations. The core inverse of $a$ is the unique solution to equations $$xa^2=a,~~ax^2=x~~\text{and}~~(ax)^{*}=ax.$$
Further, they pointed out that $a\in R^{\tiny\textcircled{\tiny \#}}$ if and only if $a\in R^{\#}$ and $a\in R^{\{1,3\}}$, in which case, $a^{\tiny\textcircled{\tiny \#}}=a^{\#}aa^{(1,3)}$.

In 2014, Baksalary and Trenkler \cite{D2014}, Manjunatha Prasad and Mohana \cite{DDDD2014} introduced respectively the concepts of generalized core inverse and core-EP inverse which exist for arbitrary square complex matrices, based on the notion of core inverse restricted to complex matrices of index 1(i.e., rank($A)=$rank($A^2$)). Let $A, G\in \mathbb{C}^{n\times n}$, where the symbol $\mathbb{C}^{n\times n}$ stands for the set of all $n\times n$ complex matrices,
$G$ is a core-EP inverse of $A$ if $GAG=G$ and
$$\mathcal{C}(G)=\mathcal{C}(G^*)=\mathcal{C}(A^d),$$ where $d$ is the index of $A$ (i.e., the smallest positive integer such that rank$(A^{d})=$rank$(A^{d+1})$), and $\mathcal{C}(G)$ is the column space of $G$.

Motivated by the above two papers, we put forward the notion of pseudo core inverse in $R$ as a generalization for both core inverse in $R$ and core-EP inverse for complex matrices.

\begin{defn}\label{1.1} Let $a \in R$. If there exists~$x \in R$~such that~
$$(\mathrm I)~xa^{m+1}=a^m~\text{for some positive integer}~m,~(\mathrm{II})~ax^2=x,~(\mathrm{III})~(ax)^{*}=ax,$$
then we call $a$ pseudo core invertible.
\end{defn}
By Theorem \ref{1} below, equations $(\mathrm{I})-(\mathrm{III})$ determine $x$ uniquely when it exists, so we may refer to $x$ as the pseudo core inverse of $a$, denoted by $a^{\scriptsize\textcircled{\tiny D}}$. The smallest  positive integer $m$ satisfying equations $(\mathrm{I})-(\mathrm{III})$ is called the pseudo core index of $a$, denoted by $I(a)$.\\

Similarly we define dual pseudo core inverse as follows:
\begin{defn}\label{1.2}~Let $a \in R$. The dual pseudo core inverse of $a$, denoted by $a_{\scriptsize\textcircled{\tiny D}}$, is the unique element $x\in R$ satisfying the following three equations~$$(\mathrm I^{\prime})~a^{m+1}x=a^m~\text{for some positive integer}~m,~({\mathrm {II}}^{\prime})~x^2a=x,~({\mathrm {III}}^{\prime})~(xa)^{*}=xa.$$
The smallest  positive integer $m$ satisfying equations $(\mathrm I^{\prime})$-$({\mathrm {III}}^{\prime})$ is called
the dual pseudo core index of $a$, denoted by $I^{\prime}(a)$.
\end{defn}

Since only multiplication is required in Definition \ref{1.1} and \ref{1.2}, the above two definitions hold, without modification, in $*$-semigroup $S$ (i.e., semigroup $S$ with an involution $*$ satisfying $(a^{*})^{*}=a$ and $(ab)^{*}=b^{*}a^{*}$ for all $a,b\in S$).\\

Here and subsequently,~$R^{\scriptsize\textcircled{\tiny D}}$ and $R_{\scriptsize\textcircled{\tiny D}}$ denote the sets of all pseudo core invertible, dual pseudo core invertible elements in $R$, respectively.

\begin{rem}~$(1)$~If $I(a)=1$~$($resp. $I^{\prime}(a)=1)$, then the pseudo core inverse~$($resp. dual pseudo core inverse$)$ of $a$ is the core inverse~$($resp. dual core inverse$)$ of $a$.\\
$(2)$~$a \in R^{\scriptsize\textcircled{\tiny D}}$ if and only if $a^{*}\in R_{\scriptsize\textcircled{\tiny D}}$. Moreover, $(a^{\scriptsize\textcircled{\tiny D}})^{*}=(a^{*})_{\scriptsize\textcircled{\tiny D}}$.
\end{rem}
In this paper, we mainly consider the pseudo core inverse case. Dual pseudo core inverse case can be investigated analogously.

In Section 2, we compile some basic facts about pseudo core inverses in $R$, such as some equivalent characterizations for an element to be pseudo core invertible are given and expressions are presented.
In Section 3, we reveal the relationship between pseudo core inverse and the inverse along an element as well as that between pseudo core inverse and $(b, c)$-inverse. In Section 4, several properties such as reverse order law and additive property of the pseudo core inverse are obtained.
In the final section, we provide two methods to compute pseudo core inverses for complex matrices.

\vspace{0.5cm}

\section{General results on pseudo core inverses}\label{a}
Several facts about pseudo core inverses in $R$ are established in this section. We begin with an auxiliary lemma.
\begin{lem}\label{2.1}~Let $a \in R$. If there exists~$x \in R$~such that
$$(\mathrm{I})~xa^{m+1}=a^m~\text{for some positive integer}~m,~~(\mathrm{II})~ax^2=x ,$$
then we have the following facts :
\begin{description}
                                    \item[$(1)$] $ax=a^kx^k$ for arbitrary positive integer $k$;
                                    \item[$(2)$] $xax=x$;
                                    \item[$(3)$] $a^kx^ka^k=a^k$ for arbitrary positive integer $k\geq m$;
                                    \item[$(4)$] $a\in R^{D}$, moreover, $a^D=x^{m+1}a^m$ with $i(a)\leq m$.
                                  \end{description}
\end{lem}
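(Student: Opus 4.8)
The plan is to establish the four facts in order, since each later fact leans on the earlier ones, using only the two hypotheses $xa^{m+1}=a^m$ and $ax^2=x$.

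For part $(1)$, the key observation is that hypothesis $(\mathrm{II})$ lets me ``peel off'' a factor of $a$: from $x=ax^2$ I can substitute repeatedly to climb up the power. First I would show $ax=a^2x^2$ by writing $ax=a(ax^2)=a^2x^2$, and then induct: assuming $ax=a^kx^k$, multiply $(\mathrm{II})$ appropriately or substitute $x=ax^2$ once more into the rightmost $x$ to produce $a^kx^k=a^{k+1}x^{k+1}$. This gives $ax=a^kx^k$ for all $k\geq 1$ by a clean induction on $k$. For part $(2)$, I want $xax=x$; the natural route is $xax = x a (a x^2) = (x a^2) x^2$, but more directly I would use $x=ax^2$ to get $xax=(ax^2)ax=ax(xax)$, suggesting I should instead start from $x=ax^2=a x\cdot x$ and use part $(1)$ with $k=2$ to replace $ax$ by $a^2x^2$, giving $x=ax^2=a^2x^3$ and so on; the cleanest argument is $xax=x(ax)=x(axax)\cdots$, so I expect to substitute $ax=a^2x^2$ into $x=ax\cdot x$ to obtain $x=a^2x^2\cdot x=a^2x^3$, then recognize $xax = x\cdot ax = x\cdot a^2x^2$ and reduce. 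I would settle the exact substitution chain at write-up time, but the mechanism is repeated use of $(\mathrm{I})$ and $(\mathrm{II})$.

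For part $(3)$, I need $a^kx^ka^k=a^k$ for $k\geq m$. Here hypothesis $(\mathrm{I})$ enters: from $xa^{m+1}=a^m$ I can right-multiply by powers of $a$ and left-multiply by powers of $x$ to relate $a^k x^k a^k$ back to $a^k$. Using part $(1)$, $a^kx^k=ax$ for every $k$, so the claim $a^kx^ka^k=a^k$ becomes $(ax)a^k=a^k$, i.e. $axa^k=a^k$ for $k\geq m$. This I would derive from $(\mathrm{I})$: since $xa^{m+1}=a^m$, multiplying on the left by $a$ gives $axa^{m+1}=a^{m+1}$, and more generally $axa^{m+\ell}=a^{m+\ell}$ for all $\ell\geq 0$ by iterating, which covers all $k\geq m+1$; the case $k=m$ follows by multiplying $xa^{m+1}=a^m$ on the left by $a$ to get $axa^{m+1}=a\cdot a^m$ and using idempotency of $ax$ (from part $(1)$, $ax=a^2x^2=(ax)(ax)$) to strip down. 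This reduction to $axa^k=a^k$ is the conceptual heart.

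For part $(4)$, I claim $a\in R^D$ with $a^D=x^{m+1}a^m$ and $i(a)\leq m$. I would verify the three defining Drazin equations for the candidate $y=x^{m+1}a^m$: namely $a^{m}ya=a^{m}$ (or $a^{k}ya=a^k$ for $k=m$), $yay=y$, and $ay=ya$. Each reduces, via parts $(1)$–$(3)$, to the identities already proved: $ay$ and $ya$ should both simplify to $ax=a^mx^m$ using $a^my^{\,}=a^m x^{m+1}a^m=\,(a^mx^m)\,x a^m = ax\cdot xa^m$ and similar manipulations, so that commutativity $ay=ya$ follows from everything collapsing to $ax$, and the Drazin index bound $i(a)\le m$ comes directly from part $(3)$ giving $a^m y a=a^m$. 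The main obstacle I anticipate is part $(2)$: getting $xax=x$ cleanly without circularity requires choosing the right order of substitutions between $(\mathrm{I})$ and $(\mathrm{II})$, whereas once parts $(1)$–$(2)$ are in hand, parts $(3)$ and $(4)$ are essentially bookkeeping that repeatedly invokes $a^kx^k=ax$.
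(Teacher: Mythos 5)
Your skeleton is the same as the paper's --- prove (1) by iterating (II), deduce (2) and (3), then verify the three Drazin identities for the candidate $x^{m+1}a^m$ --- and part (1) is correct as written; your reduction of (3) to $axa^k=a^k$ via $a^kx^k=ax$ is also legitimate and slightly cleaner than the paper's computation. The genuine gap is part (2), which is the pivot of the whole lemma and which you explicitly leave unfinished (``I would settle the exact substitution chain at write-up time''). None of the chains you wrote closes: $xax=ax(xax)$ is circular, and $xax=xa^2x^2$ stops exactly where hypothesis (I) has to enter. In fact that last chain is the right start: iterating (II) gives $xax=xa^2x^2=xa^3x^3=\cdots=xa^{m+1}x^{m+1}$, and now (I) collapses the left factor: $xa^{m+1}x^{m+1}=a^mx^{m+1}=(a^mx^m)x=(ax)x=ax^2=x$, using (1) and (II) at the end. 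This is exactly the paper's argument; without it, your (3) and (4), which lean on (2), are also incomplete.

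Beyond that, two noncommutativity slips would sink the write-up. First, you justify idempotency of $ax$ by ``from part (1), $ax=a^2x^2=(ax)(ax)$''; but $a^2x^2=aaxx$ is not $axax$ in a noncommutative ring --- idempotency of $ax$ actually requires part (2), via $axax=a(xax)=ax$. So your treatment of the $k=m$ case of (3) rests on a false justification, and in any case I do not see how idempotency of $ax$ would ``strip down'' $axa^{m+1}=a^{m+1}$ to $axa^m=a^m$. The uniform fix, valid for every $k\geq m$ with no case split: from (I), $a^k=a^ma^{k-m}=(xa^{m+1})a^{k-m}=xa^{k+1}$, hence $axa^k=ax(xa^{k+1})=(ax^2)a^{k+1}=xa^{k+1}=a^k$. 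Second, in (4) you assert that $ay$ and $ya$ ``both simplify to $ax=a^mx^m$''; in fact $ay=(ax^{m+1})a^m=x^ma^m$ and $ya=x^{m+1}a^{m+1}=x^m(xa^{m+1})=x^ma^m$, so $ay=ya$ does hold, but both equal $x^ma^m$, which is in general a \emph{different} element from $a^mx^m=ax$ (already for $m=1$ and a core-invertible matrix, $xa\neq ax$). The conclusion survives, but the stated reason is wrong, and in this subject conflating $a^mx^m$ with $x^ma^m$ is precisely the kind of error that produces false proofs.
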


\begin{proof}~
(1)~Since~$ax^2=x$, then $ax=a(ax^2)=a^2x^2=a^2(ax^2)x=a^3x^3=\cdots=a^kx^k$ for arbitrary positive integer $k$.
\vspace{2mm}\\
(2)~As~$ax=a^{m+1}x^{m+1}=a^mx^m$ follows from (1), we have $xax=xa^{m+1}x^{m+1}=a^mx^{m+1}=(ax)x=ax^2=x$.
\vspace{2mm}\\
(3)~Let $k\geq m$, then $$a^k=a^ma^{k-m}=xa^{m+1}a^{k-m}=xa^{k+1}=ax^2a^{k+1}=(a^kx^k)xa^{k+1}
=a^k(x^{k+1}a^{k+1}).$$
Thus $a^k=a^kx^ka^k$, on account of $x^{k+1}a^{k+1}=x^ka^k$.
\vspace{2mm}\\
(4)~Let~$b=x^{m+1}a^m$, then we get
\vspace{2mm}\\
\indent$a^mba=a^m(x^{m+1}a^m)a= a^m(x^{m+1}a^{m+1})=a^mx^{m}a^{m}=a^m$;

$bab=(x^{m+1}a^m)a(x^{m+1}a^m)=x^{m+1}a^{m+1}x^{m+1}a^{m}=x^{m}a^{m}x^{m+1}a^{m}=x^{m}(ax^2)a^m\\
\indent\indent\!=x^{m+1}a^m=b$;

$ab=ax^{m+1}a^m=x^ma^m=x^{m+1}a^{m+1}=(x^{m+1}a^m) a=ba$.
\vspace{2mm}\\
\noindent Hence $a^D=b=x^{m+1}a^m$, with $i(a)\leq m$.
\end{proof}
\vspace{4mm}

First and most fundamentally, we have the following theorem.
\begin{thm}\label{1}~
Let~$a\in R$. Then $a$ has at most one pseudo core inverse in $R$.
\end{thm}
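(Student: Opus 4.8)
The plan is to suppose $x$ and $y$ are both pseudo core inverses of $a$, say $x$ realizing equations $(\mathrm{I})$--$(\mathrm{III})$ with pseudo core index $m$ and $y$ with index $n$, and to deduce $x=y$. The strategy is two-stage: first pin down the Hermitian idempotent $ax$ and show it is forced to coincide with $ay$, and then recover $x$ itself from this projection together with the (unique) Drazin inverse $a^D$.

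First I would observe that both $x$ and $y$ satisfy the hypotheses of Lemma~\ref{2.1}, so each exhibits $a\in R^{D}$, with $i(a)\le m$ from $x$ and $i(a)\le n$ from $y$; crucially, since the Drazin inverse is unique, the element $a^{D}$ produced from $x$ and the one produced from $y$ are literally the same object. Writing $p=ax$ and $q=ay$, I would check that both are Hermitian idempotents: they are self-adjoint by $(\mathrm{III})$, and idempotent because $p^{2}=a(xax)=ax=p$ by Lemma~\ref{2.1}(2), and likewise $q^{2}=q$.

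The hard part, and the conceptual core of the argument, will be to show $p=q$, which is where the possibly different indices $m,n$ must be reconciled. Here I would use the absorption identity $pa^{k}=(a^{k}x^{k})a^{k}=a^{k}$, valid for $k\ge m$ by combining parts $(1)$ and $(3)$ of Lemma~\ref{2.1}, together with its analogue $qa^{k}=a^{k}$ for $k\ge n$. Choosing any fixed $k\ge\max(m,n)$ and invoking $ax=a^{k}x^{k}$ once more, I obtain $qp=q(a^{k}x^{k})=(qa^{k})x^{k}=a^{k}x^{k}=p$ and, symmetrically, $pq=q$. Taking adjoints of $qp=p$ and using that $p,q$ are self-adjoint yields $pq=p$; comparing with $pq=q$ forces $p=q$.

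Finally I would recover $x$ from the projection $p$. From $(\mathrm{II})$ and Lemma~\ref{2.1}(1) one has $x=(ax)x=a^{m}x^{m+1}$, and since $i(a)\le m$ the idempotent $a^{D}a$ fixes $a^{m}$, that is $a^{D}a\,a^{m}=a^{m}$; hence $x=a^{D}a\,x=a^{D}(ax)=a^{D}p$. The identical computation gives $y=a^{D}q$. Because $a^{D}$ is the common Drazin inverse and $p=q$, I conclude $x=a^{D}p=a^{D}q=y$, which proves that the pseudo core inverse is unique when it exists.
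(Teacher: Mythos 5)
Your proof is correct and takes essentially the same route as the paper: both arguments extract the common Drazin inverse $a^D=x^{m+1}a^m=y^{n+1}a^n$ from Lemma 2.1, establish $ax=ay$ after passing to the common exponent $k=\max\{m,n\}$, and then recover $x=a^D(ax)=a^D(ay)=y$. The only real difference is that where the paper obtains $a^kx^k=a^ky^k$ by citing the invariance of $a^k(a^k)^{(1,3)}$ under the choice of $\{1,3\}$-inverse, you reprove that fact from scratch via the Hermitian idempotents $p=ax$ and $q=ay$.
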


\begin{proof}~Suppose~$x~\text{and}~y$~satisfy conditions~(I), (II) and (III)~of our definition of pseudo core inverse, with $m, n$~as pseudo core inverse index respectively.
Let $k=\text{max}\{m, n\}$, by Lemma \ref{2.1}, we have $$x^{k+1}a^k=x^{m+1}a^m=a^D=y^{n+1}a^{n}=y^{k+1}a^{k},$$
and,
$a^kx^ka^k=a^k,~ (a^kx^k)^{*}=(ax)^{*}=ax=a^kx^k$ which yields that $x^k$ is a \{1,3\}-inverse of $a^k$. Likewise,
$y^k$ is one of  the \{1,3\}-inverses of $a^k$. So $a^kx^k=a^k(a^k)^{(1,3)}=a^ky^k$. Then
\begin{equation*}
\begin{aligned}
x&=x(ax)=x(a^kx^k)=x^2a^{k+1}x^{k}=\cdots=x^{k+1}a^{2k}x^{k}=(x^{k+1}a^{k})(a^kx^{k})\\
 &=a^D(a^k(a^k)^{(1,3)})=(y^{k+1}a^{k})(a^ky^{k})=y^{k+1}a^{2k}y^{k}=y.
\end{aligned}
\end{equation*}
Thus $a$ has at most one pseudo core inverse.
\end{proof}
\vspace{4mm}

The following result gives an  equivalent characterization for the existence of the pseudo core inverse as well as the expression in terms of Drazin inverse and \{1,3\}-inverse.
\begin{thm}~Let $a\in R$ and let $k$ be positive integer with $k\geq m$. Then $a \in R^{\scriptsize\textcircled{\tiny D}}$ with $I(a)=m$~if and only if~$a \in R^{D}$ with $i(a)=m$~and~$a^k \in R^{\{1,3\}}$.
In this case, $a^{\scriptsize\textcircled{\tiny D}}=a^Da^k(a^k)^{(1,3)}.$
\end{thm}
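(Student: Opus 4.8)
The plan is to prove both implications and the formula by leaning almost entirely on Lemma \ref{2.1}, which already does the heavy lifting for the forward direction, and on Theorem \ref{1} for uniqueness of the resulting inverse.

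\textbf{Forward direction.} Suppose $a\in R^{\scriptsize\textcircled{\tiny D}}$ with $I(a)=m$, and let $x=a^{\scriptsize\textcircled{\tiny D}}$ satisfy (I)--(III). First I would invoke Lemma \ref{2.1}(4) to conclude $a\in R^D$ with $a^D=x^{m+1}a^m$ and $i(a)\le m$; a short argument is needed to upgrade this to $i(a)=m$, namely that if $a$ admitted a Drazin index smaller than $m$ then $x$ would satisfy (I)--(III) with a smaller exponent, contradicting minimality of $I(a)=m$. Next, for the claim $a^k\in R^{\{1,3\}}$ with $k\ge m$, I would use Lemma \ref{2.1}(3) to get $a^k x^k a^k=a^k$ and Lemma \ref{2.1}(1) together with (III) to get $(a^kx^k)^*=(ax)^*=ax=a^kx^k$; hence $x^k$ is a $\{1,3\}$-inverse of $a^k$.

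\textbf{Reverse direction and formula.} Conversely, assume $a\in R^D$ with $i(a)=m$ and $a^k\in R^{\{1,3\}}$ for some fixed $k\ge m$. I would set $x:=a^D a^k (a^k)^{(1,3)}$ and verify directly that this $x$ satisfies (I), (II), (III). Verifying (III) is immediate from $a x = a^{k}(a^{k})^{(1,3)}$ (using $a a^D=a^D a$ and $a^D a^{k}=a^{k-1}$ for $k\ge m\ge 1$, so that $ax=a\,a^Da^k(a^k)^{(1,3)}=a^k(a^k)^{(1,3)}$, whose symmetry is the $\{1,3\}$ condition). Equations (I) and (II) then reduce to manipulations among the Drazin identities $a a^D=a^D a$, $a^D a a^D=a^D$, $a^m a^D a=a^m$ and the $\{1,3\}$ identity $a^k(a^k)^{(1,3)}a^k=a^k$; the key computation is to push $(a^k)^{(1,3)}$ through using $a^k(a^k)^{(1,3)}a^k=a^k$ and to absorb the column-space part via $a^D$. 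Once I check that $x$ satisfies (I)--(III), Theorem \ref{1} guarantees it is \emph{the} pseudo core inverse, establishing both the existence of $a^{\scriptsize\textcircled{\tiny D}}$ and the formula $a^{\scriptsize\textcircled{\tiny D}}=a^Da^k(a^k)^{(1,3)}$.

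\textbf{Main obstacle.} The routine parts are the $\{1,3\}$-claim in the forward direction and (III) in the reverse direction. The subtler points are two: first, confirming that the candidate $x=a^Da^k(a^k)^{(1,3)}$ is genuinely independent of the choice of $k\ge m$ and of the particular $\{1,3\}$-inverse chosen (which follows a posteriori from uniqueness, but one should check the construction is well-posed for at least one choice); and second, the exponent-matching in verifying (I), since one must show $xa^{m+1}=a^m$ where $x$ carries a factor $a^k$ with possibly $k>m$, so the proof of (I) must use $a^Da^{k+1}=a^k$ and the pull-back $a^k(a^k)^{(1,3)}a^{m+1}=a^{m+1}$ (valid because $a^{m+1}$ lies in the appropriate range when $k\ge m$). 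Getting these exponent bookkeeping steps exactly right, rather than any conceptual difficulty, is where I expect to spend the most care.
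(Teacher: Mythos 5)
Your proposal follows the paper's proof essentially step for step: the forward direction rests on Lemma \ref{2.1} together with a minimality argument for the Drazin index, and the reverse direction takes the candidate $x=a^Da^k(a^k)^{(1,3)}$ and verifies (I)--(III) directly, with uniqueness (Theorem \ref{1}) then giving the formula. Two small repairs are needed when the details are written out. First, your intermediate identity $a^Da^k=a^{k-1}$ for $k\ge m$ is false at $k=m$: given $i(a)\le m$, the equality $a^Da^m=a^{m-1}$ is precisely the statement $i(a)\le m-1$, so it contradicts $i(a)=m$. What you actually need (and what the paper uses) is $aa^Da^k=a^k$, valid for all $k\ge m$, which directly yields $ax=aa^Da^k(a^k)^{(1,3)}=a^k(a^k)^{(1,3)}$; likewise your ``pull-back'' $a^k(a^k)^{(1,3)}a^{m+1}=a^{m+1}$ is correct but should be justified by writing $a^{m+1}=a^{k+1}(a^D)^{k-m}\in a^kR$, which is how the paper carries out the computation of (I). Second, the theorem asserts $I(a)=m$ in the reverse direction, whereas your verification only yields $a\in R^{\scriptsize\textcircled{\tiny D}}$ with $I(a)\le m$; the missing one-line step, present in the paper, is that $I(a)<m$ would force $i(a)=I(a)<m$ by the already-proved forward direction, contradicting the hypothesis $i(a)=m$.
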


\begin{proof}~Suppose $a \in R^{\scriptsize\textcircled{\tiny D}}$ with $I(a)=m$. By Lemma \ref{2.1}, we have~$a\in R^{D}$ with $i(a) \leq m$ and $a^k \in R^{\{1,3\}}$ for arbitrary $k\geq m$. $i(a)<m$ would mean that $xa^m=a^{m-1}$, which is contrary to our definition of $m$.
So $i(a)=m$.

Conversely,~suppose $a \in R^{D}$ with $i(a)=m$~and suppose~$a^k \in R^{\{1,3\}}$ for some $k\geq m$.~
Setting $x=a^Da^k(a^k)^{(1,3)}$, next we prove $a^{\scriptsize\textcircled{\tiny D}}=x$.
In fact,
 \begin{equation*}
 \begin{aligned}
&xa^{m+1}=a^Da^k(a^k)^{(1,3)}a^{m+1}=a^Da^{k}(a^k)^{(1,3)}a^{k+1}(a^D)^{k-m}=a^k(a^D)^{k-m}=a^m;\\ &ax^2=aa^Da^k(a^k)^{(1,3)}a^Da^k(a^k)^{(1,3)}=a^k(a^k)^{(1,3)}a^Da^k(a^k)^{(1,3)}= a^Da^k(a^k)^{(1,3)}=x;\\
&ax=aa^Da^k(a^k)^{(1,3)}=a^k(a^k)^{(1,3)},~\text{so}~(ax)^{*}=ax.
\end{aligned}
\end{equation*}
Hence $a \in R^{\scriptsize\textcircled{\tiny D}}$ with $I(a)\leq m$. $I(a)<m$ is contrary to the definition of $m$.
So~$I(a)=m$.~
\end{proof}

\begin{rem} Theorem 2.3 manifests that $I(a)=i(a)$, whenever $a\in R^{\scriptsize\textcircled{\tiny D}}$.
So, if $a\in R^{\scriptsize\textcircled{\tiny D}}\cap R_{\scriptsize\textcircled{\tiny D}}$, then $I(a)=I^{\prime}(a)=i(a)$.
\end{rem}
\vspace{4mm}
It is well known that $a\in R^D$ if and only if $a^k\in R^{\#}$ for some positive integer $k$ \cite{D2002}; $a\in R^D$~if~and~only~if $a^k\in R^D$ for arbitrary positive integer $k$ if and only if $a^k\in R^D$ for some positive integer $k$ \cite{D1958}; if $a\in R^{D}$, then $a^D\in R^{D}$ with $(a^D)^D=a^2a^D$ \cite{D1958}.

Similarly, we have the following results.
\begin{thm}~Let $a\in R$. Then $a\in R^{\scriptsize\textcircled{\tiny D}}$ if and only if $a^m\in R^{\tiny\textcircled{\tiny \#}}$ for some positive integer $m$. In this case, $(a^m)^{\tiny\textcircled{\tiny \#}}=(a^{\scriptsize\textcircled{\tiny D}})^m$ and $a^{\scriptsize\textcircled{\tiny D}}=a^{m-1}(a^m)^{\tiny\textcircled{\tiny \#}}$.
\end{thm}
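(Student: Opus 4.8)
The plan is to reduce the statement to two characterizations already available, namely Theorem 2.3 (relating pseudo core invertibility to Drazin invertibility together with $\{1,3\}$-invertibility of powers) and the Xu--Chen--Zhang description $R^{\tiny\textcircled{\tiny \#}}=R^{\#}\cap R^{\{1,3\}}$ recorded in the introduction, supplemented by the standard Drazin facts that $a\in R^{D}$ if and only if $a^{m}\in R^{\#}$ for some positive integer $m$, and that $a^{m}\in R^{\#}$ forces $i(a)\leq m$. With these in hand the equivalence becomes essentially a bookkeeping argument on indices, after which only the two displayed formulas require genuine computation.

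For the forward implication I would set $m_{0}=I(a)$ and invoke Theorem 2.3 to obtain $a\in R^{D}$ with $i(a)=m_{0}$ and $a^{k}\in R^{\{1,3\}}$ for every $k\geq m_{0}$. For any $m\geq m_{0}$ the power $a^{m}$ is then group invertible and lies in $R^{\{1,3\}}$, so $a^{m}\in R^{\#}\cap R^{\{1,3\}}=R^{\tiny\textcircled{\tiny \#}}$; taking $m=m_{0}$ settles existence. For the converse, assuming $a^{m}\in R^{\tiny\textcircled{\tiny \#}}$ for some $m$, I would read off $a^{m}\in R^{\#}$ and $a^{m}\in R^{\{1,3\}}$; then $a^{m}\in R^{\#}\subseteq R^{D}$ gives $a\in R^{D}$ with $i(a)\leq m$, so that $m\geq i(a)$ and Theorem 2.3 (with $k=m$) delivers $a\in R^{\scriptsize\textcircled{\tiny D}}$. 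The key point in the converse is that $a^{m}\in R^{\#}$ pins down $m\geq i(a)$, which is exactly what is needed to invoke Theorem 2.3 and Lemma 2.1(3); this index control is where I expect to have to be most careful.

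It remains to verify the formulas for an arbitrary $m$ with $a^{m}\in R^{\tiny\textcircled{\tiny \#}}$, where by the above $m\geq i(a)=I(a)=m_{0}$. Writing $x=a^{\scriptsize\textcircled{\tiny D}}$, I would first record two auxiliary identities: $xa^{j+1}=a^{j}$ for all $j\geq m_{0}$ (bootstrapped from $xa^{m_{0}+1}=a^{m_{0}}$) and $a^{k}x^{k+1}=x$ for all $k\geq 1$ (bootstrapped from $ax^{2}=x$). Using these together with $a^{k}x^{k}=ax$ from Lemma 2.1(1), I would check that $x^{m}$ satisfies the three defining equations of the core inverse of $a^{m}$ supplied by Xu--Chen--Zhang: $x^{m}(a^{m})^{2}=x^{m}a^{2m}=a^{m}$ by peeling off powers with the first identity, $a^{m}(x^{m})^{2}=a^{m}x^{m+1}x^{m-1}=x\cdot x^{m-1}=x^{m}$ by the second identity, and $(a^{m}x^{m})^{*}=(ax)^{*}=ax=a^{m}x^{m}$. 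By uniqueness of the core inverse this gives $(a^{m})^{\tiny\textcircled{\tiny \#}}=x^{m}=(a^{\scriptsize\textcircled{\tiny D}})^{m}$, and the second formula follows at once since $a^{m-1}(a^{m})^{\tiny\textcircled{\tiny \#}}=a^{m-1}x^{m}=(ax)x=ax^{2}=x$ for $m\geq 2$, the case $m=1$ being trivial. The only real obstacle, beyond the index bookkeeping already noted, is organizing these telescoping computations so that every exponent stays $\geq m_{0}$ whenever the first identity is applied.
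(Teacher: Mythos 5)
Your proposal is correct, and half of it coincides with the paper's own argument: your verification that $x^m$ (where $x=a^{\scriptsize\textcircled{\tiny D}}$, $m\geq I(a)$) satisfies the three Xu--Chen--Zhang equations $y(a^m)^2=a^m$, $a^my^2=y$, $(a^my)^*=a^my$ is exactly how the paper proves the forward implication and the formula $(a^m)^{\tiny\textcircled{\tiny \#}}=(a^{\scriptsize\textcircled{\tiny D}})^m$, down to the same telescoping identities. Where you genuinely diverge is the converse. The paper proves it by a second self-contained computation: it sets $x=a^{m-1}(a^m)^{\tiny\textcircled{\tiny \#}}$ and checks directly that $x$ satisfies the defining equations $(\mathrm I)$--$(\mathrm{III})$ of the pseudo core inverse, using group-inverse identities such as $(a^m)^{\tiny\textcircled{\tiny \#}}a^m=(a^m)^{\#}a^m$. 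You instead get existence abstractly: $a^m\in R^{\tiny\textcircled{\tiny \#}}=R^{\#}\cap R^{\{1,3\}}$, group invertibility of $a^m$ pins down $a\in R^D$ with $i(a)\leq m$, and Theorem 2.3 then applies; the formula $a^{\scriptsize\textcircled{\tiny D}}=a^{m-1}(a^m)^{\tiny\textcircled{\tiny \#}}$ then falls out in one line from $(a^m)^{\tiny\textcircled{\tiny \#}}=x^m$ via $a^{m-1}x^m=(ax)x=ax^2=x$. Your route is shorter in the converse and has the merit of establishing both displayed formulas for \emph{every} $m$ with $a^m\in R^{\tiny\textcircled{\tiny \#}}$, whereas the paper verifies the first formula only for $m=I(a)$. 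The cost is reliance on two index facts the paper never states explicitly: that $a^m\in R^{\#}$ forces $i(a)\leq m$, and that $m\geq i(a)$ forces $a^m\in R^{\#}$. Both are standard and easy (for the first, check that $a^{m-1}(a^m)^{\#}$ is a Drazin inverse of $a$ satisfying $a^m\cdot a^{m-1}(a^m)^{\#}\cdot a=a^m$; for the second, $(a^D)^m$ is the group inverse of $a^m$), but since the paper quotes only the bare equivalence $a\in R^D\Leftrightarrow a^k\in R^{\#}$ for some $k$, you should include these one-line verifications to make your argument self-contained --- you correctly identified this index control as the delicate point.
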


\begin{proof}~Suppose $a^{\scriptsize\textcircled{\tiny D}}=x$ with $I(a)=m$. Setting $y=x^m$, by the definition of pseudo core inverse and by Lemma 2.1,
 we can check that
\begin{equation*}
\begin{aligned}
&y(a^m)^2=x^m(a^m)^2=(x^ma^m)a^m=(x^{m+1}a^{m+1})a^m=(x^{m+1}a^m)a^{m+1}=a^Da^{m+1}=a^m;\\
&a^my^2=a^m(x^m)^2=(a^mx^m)x^m=axx^m=ax^{m+1}=x^m=y;\\
&(a^my)^{*}=(a^mx^m)^{*}=(ax)^{*}=ax=a^mx^m=a^my.
\end{aligned}
\end{equation*}
Therefore $(a^m)^{\tiny\textcircled{\tiny \#}}=y=(a^{\scriptsize\textcircled{\tiny D}})^m$.

On the contrary,~since $a^m\in R^{\tiny\textcircled{\tiny \#}}$, by the notion of core inverse, we have
$$(a^m)^{\tiny\textcircled{\tiny \#}}(a^m)^2=a^m,~
~a^m((a^m)^{\tiny\textcircled{\tiny \#}})^2=(a^m)^{\tiny\textcircled{\tiny \#}}~~\text{and}~
~(a^m(a^m)^{\tiny\textcircled{\tiny \#}})^{*}=a^m(a^m)^{\tiny\textcircled{\tiny \#}}.$$
 Let $x=a^{m-1}(a^m)^{\tiny\textcircled{\tiny \#}}$, then we can notice

$xa^{m+1}=a^{m-1}(a^m)^{\tiny\textcircled{\tiny \#}}a^{m+1}=a^{m-1}((a^m)^{\#}a^m)a=a^m;$

$ax^2=a(a^{m-1}(a^m)^{\tiny\textcircled{\tiny \#}})^2
=a^m(a^m)^{\tiny\textcircled{\tiny \#}}a^{m-1}(a^m)^{\tiny\textcircled{\tiny \#}}
=a^m(a^m)^{\tiny\textcircled{\tiny \#}}a^{m-1}a^m((a^m)^{\tiny\textcircled{\tiny \#}})^2$\\
\indent\indent~$=a^{m-1}(a^m)^{\tiny\textcircled{\tiny \#}}=x;$

$(ax)^{*}=(aa^{m-1}(a^m)^{\tiny\textcircled{\tiny \#}})^{*}
=(a^m(a^m)^{\tiny\textcircled{\tiny \#}})^{*}=a^m(a^m)^{\tiny\textcircled{\tiny \#}}
=a(a^{m-1}(a^m)^{\tiny\textcircled{\tiny \#}})=ax.$\\
Hence~$a^{\scriptsize\textcircled{\tiny D}}=x=a^{m-1}(a^m)^{\tiny\textcircled{\tiny \#}}$.
\end{proof}
\vspace{4mm}

\begin{thm}~Let $a \in R$ and let $k$ be positive integer. Then
$a\in R^{\scriptsize\textcircled{\tiny D}}~\text{if~and~only~if}
~a^k\in R^{\scriptsize\textcircled{\tiny D}}.$ In this case, $(a^k)^{\scriptsize\textcircled{\tiny D}}=(a^{\scriptsize\textcircled{\tiny D}})^k$ and $a^{\scriptsize\textcircled{\tiny D}}=a^{k-1}(a^k)^{\scriptsize\textcircled{\tiny D}}$.
\end{thm}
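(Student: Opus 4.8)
The plan is to split the equivalence into its two implications, treating the two displayed formulas together with the ``$\Rightarrow$'' direction. For the forward implication I would \emph{produce} the pseudo core inverse of $a^k$ explicitly, namely $x^k$ where $x=a^{\scriptsize\textcircled{\tiny D}}$; this simultaneously settles $a^k\in R^{\scriptsize\textcircled{\tiny D}}$ and the identity $(a^k)^{\scriptsize\textcircled{\tiny D}}=(a^{\scriptsize\textcircled{\tiny D}})^k$. The converse is best handled abstractly, through the power characterisation in Theorem 2.5, without computing anything.

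First I would set $x=a^{\scriptsize\textcircled{\tiny D}}$ with $m=I(a)$ and verify that $y:=x^k$ satisfies the three defining equations of the pseudo core inverse of $a^k$. Conditions (II) and (III) are immediate from Lemma \ref{2.1}(1), which gives $a^kx^k=ax$: indeed $(a^ky)^{*}=(ax)^{*}=ax=a^ky$ settles (III), while $a^ky^2=(a^kx^k)x^k=ax^{k+1}=x^k=y$ (using $ax^2=x$) settles (II). The only equation needing care is (I). Here I would first prove by induction on $j$ the auxiliary identity $x^j a^{m+j}=a^m$, the inductive step being $x^{j+1}a^{m+j+1}=x^j(xa^{m+1})a^{j}=x^ja^{m+j}=a^m$; then, right-multiplying $x^k a^{m+k}=a^m$ by $a^{ks-m}$ for any $s$ with $ks\geq m$ yields $x^k(a^k)^{s+1}=(a^k)^s$, i.e. (I) holds with pseudo core index $s=\lceil m/k\rceil$. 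By the uniqueness statement (Theorem \ref{1}), $(a^k)^{\scriptsize\textcircled{\tiny D}}=x^k$. The second formula then drops out: $a^{k-1}(a^k)^{\scriptsize\textcircled{\tiny D}}=a^{k-1}x^k=(a^{k-1}x^{k-1})x=(ax)x=ax^2=x=a^{\scriptsize\textcircled{\tiny D}}$, again via Lemma \ref{2.1}(1).

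For the converse I would avoid recomputation and invoke Theorem 2.5 twice: if $a^k\in R^{\scriptsize\textcircled{\tiny D}}$ then $(a^k)^{m'}=a^{km'}\in R^{\tiny\textcircled{\tiny \#}}$ for some positive integer $m'$, and reading Theorem 2.5 back for $a$ with the positive integer $km'$ gives $a\in R^{\scriptsize\textcircled{\tiny D}}$; the two formulas are then supplied by the forward direction already established. I expect the only real obstacle to be equation (I) in the forward direction --- specifically, locating the correct auxiliary identity $x^j a^{m+j}=a^m$ and the right pseudo core index for $a^k$; everything else reduces to the bookkeeping already packaged in Lemma \ref{2.1} and Theorem 2.5.
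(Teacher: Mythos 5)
Your proposal is correct, and its forward half is essentially the paper's: both take $x=a^{\scriptsize\textcircled{\tiny D}}$ and verify that $x^k$ satisfies the three defining equations for $a^k$, differing only in bookkeeping (the paper iterates $(a^k)^n=xa^{kn+1}$ to get $(a^k)^n=x^k(a^k)^{n+1}$, where $n$ is defined by $0\leq kn-m<k$, while you induct on $x^ja^{m+j}=a^m$ and then pad with $a^{ks-m}$; both land on the exponent $\lceil m/k\rceil$, and the paper additionally proves this is exactly $I(a^k)$, a refinement the statement does not require). The converse is where you genuinely diverge. The paper sets $x=a^{k-1}y$ with $y=(a^k)^{\scriptsize\textcircled{\tiny D}}$ and verifies the three equations for $a$ by a fairly long direct computation, which yields the formula $a^{\scriptsize\textcircled{\tiny D}}=a^{k-1}(a^k)^{\scriptsize\textcircled{\tiny D}}$ and the bound $I(a)\leq kn$ in the same stroke. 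You instead get existence abstractly by citing Theorem 2.5 twice: $a^k\in R^{\scriptsize\textcircled{\tiny D}}$ gives $a^{km'}=(a^k)^{m'}\in R^{\tiny\textcircled{\tiny \#}}$ for some $m'$, and reading Theorem 2.5 back for $a$ with exponent $km'$ gives $a\in R^{\scriptsize\textcircled{\tiny D}}$; the two displayed formulas then follow from your forward half, since $a^{k-1}(a^{\scriptsize\textcircled{\tiny D}})^k=(ax)x=ax^2=a^{\scriptsize\textcircled{\tiny D}}$ by Lemma 2.1(1). This is legitimate---Theorem 2.5 precedes this result and its proof does not depend on it, so there is no circularity---and it is shorter and cleaner; what it trades away is only the self-contained explicit verification of $a^{k-1}(a^k)^{\scriptsize\textcircled{\tiny D}}$ as a candidate, which your route recovers a posteriori. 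One point worth making explicit: when $ks=m$, your right-multiplication by $a^{ks-m}$ is multiplication by $1$, which is harmless since $R$ is unital (the paper itself uses $1$, e.g.\ in Theorem 2.10).
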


\begin{proof}~~Suppose $a^{\scriptsize\textcircled{\tiny D}}=x$ with $I(a)=m$, then we have
$$xa^{m+1}=a^m,~ax^2=x,~(ax)^{*}=ax.$$
For arbitrary positive integer $k$, let $n$ be the unique integer satisfying $0\leq kn-m<k$, then,
\begin{equation*}
\begin{aligned}
&(a^k)^n=a^{kn}=a^ma^{kn-m}=xa^{m+1}a^{kn-m}=xa^{kn+1},~\mbox{by~induction},~
(a^k)^n=x^k(a^k)^{n+1};\\
&a^k(x^k)^2=(a^kx^k)x^k=(ax)x^k=ax^{k+1}=x^k;\\
&a^kx^k=ax,~ \text{so}~(a^kx^k)^{*}=a^kx^k.
\end{aligned}
\end{equation*}
Thus $(a^k)^{\scriptsize\textcircled{\tiny D}}=x^k=(a^{\scriptsize\textcircled{\tiny D}})^k$, with $I(a^k)\leq n$. $I(a^k)<n$ clearly forces that $x^k(a^k)^n=(a^k)^{n-1}$, and, since $x^ka^{kn}=(a^k)^{\scriptsize\textcircled{\tiny D}}a^{kn}=(a^k)^Da^{kn}=(a^D)^ka^{kn}=a^Da^{kn-k+1}$, which~implies $a^Da^{kn-k+1}=a^{kn-k}$, whence, by $i(a)=I(a)=m$, we should have $kn-k\geq m$, contrary to our definition of $n$. Hence $I(a^k)=n$.
\vspace{2mm}

Conversely, suppose $(a^k)^{\scriptsize\textcircled{\tiny D}}=y$ with $I(a^k)=n$, then we have
$$y(a^k)^{n+1}=(a^k)^n,~~~a^ky^2=y,~~~(a^ky)^{*}=a^ky.$$
Set $x=a^{k-1}y$.~In what follows, we prove $a^{\scriptsize\textcircled{\tiny D}}=x$.
\vspace{2mm}

\mbox{Since}~$xa^{kn+1}=a^{k-1}ya^{kn+1}=a^{k-1}(a^ky^2)a^{kn+1}$, then $xa^{kn+1}= a^{k-1}(a^{kn}y^{n+1})a^{kn+1}$ by induction. So,~we~get~
\begin{equation*}
\begin{aligned}
&xa^{kn+1}=a^{kn+k-1}(y^{n+1}a^{kn})a=a^{kn+k-1}(a^k)^Da=(a^k)^Da^{kn+k}=y^{n+1}(a^k)^{n+1}a^{kn}=y^{n}a^{kn}a^{kn}\\
&~~~~~~~~~=y^{n}(a^{k})^{2n}=a^{kn};\\
&ax^2=a(a^{k-1}y)^2=a^kya^{k-1}y=a^kya^{k-1}(a^ky^2)=a^kya^{k-1}((a^k)^{n+1}y^{n+2}),~ \\
&~~~~~=a^k(y(a^k)^{n+1})a^{k-1}y^{n+2}=a^ka^{kn}a^{k-1}y^{n+2}=a^{k-1}a^{kn}y^{n+1}=a^{k-1}y=x;~~~~~~~~~~~\\
&ax=aa^{k-1}y=a^ky,~ \text{so}~(ax)^{*}=ax.
\end{aligned}
\end{equation*}
From the above, we get $a^{\scriptsize\textcircled{\tiny D}}=x=a^{k-1}(a^k)^{\scriptsize\textcircled{\tiny D}}$, with $I(a)\leq kn$.
\end{proof}
\vspace{4mm}

\begin{thm}~Let $a\in R$. If $a\in R^{\scriptsize\textcircled{\tiny D}}$, then $a^{\scriptsize\textcircled{\tiny D}}\in R^{\scriptsize\textcircled{\tiny D}}$. In fact $a^{\scriptsize\textcircled{\tiny D}}$ is core invertible whenever it exists, and $(a^{\scriptsize\textcircled{\tiny D}})^{\scriptsize\textcircled{\tiny D}}=(a^{\scriptsize\textcircled{\tiny D}})^{\tiny\textcircled{\tiny \#}}=a^2a^{\scriptsize\textcircled{\tiny D}}$.
\end{thm}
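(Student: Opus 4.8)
The plan is to write $x = a^{\scriptsize\textcircled{\tiny D}}$ with pseudo core index $m = I(a)$, and to prove that $x$ is in fact \emph{core} invertible (i.e.\ of pseudo core index $1$) with $x^{\tiny\textcircled{\tiny \#}} = a^2 x$. The whole statement then drops out: an element of pseudo core index $1$ has its pseudo core inverse equal to its core inverse, as recorded in the Remark following Definition \ref{1.2}, so $a^{\scriptsize\textcircled{\tiny D}} \in R^{\scriptsize\textcircled{\tiny D}}$ and $(a^{\scriptsize\textcircled{\tiny D}})^{\scriptsize\textcircled{\tiny D}} = (a^{\scriptsize\textcircled{\tiny D}})^{\tiny\textcircled{\tiny \#}} = a^2 a^{\scriptsize\textcircled{\tiny D}}$. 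Throughout I would use the defining relations $xa^{m+1}=a^m$, $ax^2=x$, $(ax)^*=ax$, together with the consequences of Lemma \ref{2.1}, namely $a^k x^k = ax$ for every $k\geq 1$ and $xax = x$. I would also record two elementary facts: $ax$ is a Hermitian idempotent, since $a(xax)=ax$ gives idempotency and (III) gives self-adjointness; and iterating $ax^2=x$ yields $x = a^{k}x^{k+1}$ for all $k\geq 1$, in particular $x = a^{m-1}x^m$.

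The crux of the argument is the single identity $xa^2x = ax$. I would obtain it by substituting the trailing factor $x = a^{m-1}x^m$ and then collapsing with equation (I):
\[
xa^2x = xa^2\!\left(a^{m-1}x^m\right) = xa^{m+1}x^m = \left(xa^{m+1}\right)x^m = a^m x^m = ax,
\]
the last step being Lemma \ref{2.1}(1). This is the only place where equation (I) and the particular power $m$ are genuinely invoked, and it is the step I expect to be the main obstacle; everything afterwards is formal once this relation is available.

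With $xa^2x = ax$ in hand, I would verify that $a^2x$ is the group inverse of $x$ by checking the three axioms, each of which reduces to a short computation: $x(a^2x)x = (xa^2x)x = (ax)x = ax^2 = x$; next $(a^2x)x(a^2x) = a^2x^2\,a^2x = (ax)(a^2x) = a(xa^2x) = a(ax) = a^2x$, using $a^2x^2 = ax$; and finally the commutation $x(a^2x) = xa^2x = ax = a^2x^2 = (a^2x)x$. Hence $x \in R^{\#}$ with $x^{\#} = a^2 x$ and $xx^{\#} = x^{\#}x = ax$.

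Since $xx^{\#} = ax$ is Hermitian by (III), the group inverse $x^{\#}$ is simultaneously a $\{1,3\}$-inverse of $x$, because $xx^{\#}x = x$ and $(xx^{\#})^* = xx^{\#}$. Therefore $x$ is core invertible, and by the expression $x^{\tiny\textcircled{\tiny \#}} = x^{\#}xx^{(1,3)}$ with the choice $x^{(1,3)} = x^{\#}$ we get $x^{\tiny\textcircled{\tiny \#}} = x^{\#}xx^{\#} = x^{\#} = a^2x$; alternatively one checks the five defining equations of the core inverse directly. Consequently $a^{\scriptsize\textcircled{\tiny D}} = x$ has pseudo core index $1$, so $a^{\scriptsize\textcircled{\tiny D}} \in R^{\scriptsize\textcircled{\tiny D}}$, and $(a^{\scriptsize\textcircled{\tiny D}})^{\scriptsize\textcircled{\tiny D}} = (a^{\scriptsize\textcircled{\tiny D}})^{\tiny\textcircled{\tiny \#}} = a^2 a^{\scriptsize\textcircled{\tiny D}}$, as claimed.
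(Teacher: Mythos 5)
Your proposal is correct, and it arrives at the same candidate element as the paper: the paper's proof is only a sketch, asserting that one need merely verify that $y=a^2x$ satisfies $yx^2=x$, $xy^2=y$, $(xy)^{*}=xy$ (i.e., that $y$ is the core inverse of $x$ via the three-equation characterization recalled in the introduction), with all details omitted. Where you genuinely differ is in how core invertibility of $x$ is certified: rather than checking those three equations directly, you first show $x\in R^{\#}$ with $x^{\#}=a^2x$ and $xx^{\#}=x^{\#}x=ax$, observe that $ax$ being Hermitian makes $x^{\#}$ simultaneously a $\{1,3\}$-inverse of $x$, and then invoke the characterization $x\in R^{\tiny\textcircled{\tiny \#}}$ iff $x\in R^{\#}\cap R^{\{1,3\}}$ with $x^{\tiny\textcircled{\tiny \#}}=x^{\#}xx^{(1,3)}$, cited from Xu, Chen and Zhang. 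Both verifications hinge on the same computational core, the identity $xa^2x=ax$, which you derive cleanly from $x=a^{m-1}x^m$ and equation (I) together with Lemma 2.1(1); the paper's omitted details would require exactly this identity (e.g. $xy^2=(xa^2x)a^2x=(ax)a^2x=a^2x$), so your write-up in effect supplies what the paper leaves to the reader, and additionally makes the group-inverse structure of $x$ explicit. One step you should spell out: passing from ``$x$ is core invertible'' to ``$x^{\scriptsize\textcircled{\tiny D}}$ exists and equals $x^{\tiny\textcircled{\tiny \#}}$'' uses the converse direction of Remark 1.3(1), which is immediate here because the core inverse satisfies the defining equations (I)--(III) with $m=1$ (this is precisely the three-equation characterization of the core inverse) and the pseudo core inverse is unique by Theorem 2.2; but that is a one-line observation, not a gap.
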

\begin{proof}~To prove this, one has merely to verify that if $x$ satisfies $(\mathrm{I}), (\mathrm{II})~\text{and}~(\mathrm{III})$ which defines the pseudo core inverse, then $y=a^2x$ satisfies
$$yx^2=x,~~~~xy^2=y,~~~~(xy)^{*}=xy.$$
Here we omit the details.
\end{proof}
\vspace{4mm}

\begin{prop}~Let $a\in R^{\scriptsize\textcircled{\tiny D}}$. Then $((a^{\scriptsize\textcircled{\tiny D}})^{\scriptsize\textcircled{\tiny D}})^{\scriptsize\textcircled{\tiny D}}=a^{\scriptsize\textcircled{\tiny D}}.$
\end{prop}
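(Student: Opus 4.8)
The plan is to abbreviate $x=a^{\scriptsize\textcircled{\tiny D}}$ and to lean on the preceding theorem, which states that $x$ is core invertible and that $x^{\scriptsize\textcircled{\tiny D}}=x^{\tiny\textcircled{\tiny \#}}=a^2x$. Writing $z=a^2x$ for this element, the claim $((a^{\scriptsize\textcircled{\tiny D}})^{\scriptsize\textcircled{\tiny D}})^{\scriptsize\textcircled{\tiny D}}=a^{\scriptsize\textcircled{\tiny D}}$ is precisely the assertion $z^{\scriptsize\textcircled{\tiny D}}=x$. By the uniqueness guaranteed in Theorem~\ref{1}, it is enough to verify that $x$ satisfies the three defining equations of the pseudo core inverse of $z$; since $z$ is core invertible I anticipate pseudo core index $1$, so I would check $(\mathrm{I})~xz^2=z$, $(\mathrm{II})~zx^2=x$, and $(\mathrm{III})~(zx)^*=zx$.

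Two of the three come at no cost. Because $z=x^{\tiny\textcircled{\tiny \#}}$ is the core inverse of $x$, the five core-inverse equations for the pair $(x,z)$ hold; the fourth of these, $zx^2=x$, is exactly $(\mathrm{II})$, and the fifth, $xz^2=z$, is exactly $(\mathrm{I})$. Thus no genuine computation is needed for conditions $(\mathrm{I})$ and $(\mathrm{II})$.

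The remaining condition $(\mathrm{III})$ is the crux and the step I expect to be the main obstacle: the core-inverse equations for $x$ only furnish the Hermitian relation $(xz)^*=xz$ for the product in the opposite order, which is not the same as $(zx)^*=zx$. The observation that unlocks this is that the reversed product simplifies drastically. Using property $(\mathrm{II})$ of $x=a^{\scriptsize\textcircled{\tiny D}}$, namely $ax^2=x$, one finds $zx=a^2x\cdot x=a(ax^2)=ax$; hence $(zx)^*=(ax)^*=ax=zx$ by property $(\mathrm{III})$ of the pseudo core inverse of $a$.

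Once all three equations are established for $m=1$, Theorem~\ref{1} forces $z^{\scriptsize\textcircled{\tiny D}}=x$, which is the desired identity $((a^{\scriptsize\textcircled{\tiny D}})^{\scriptsize\textcircled{\tiny D}})^{\scriptsize\textcircled{\tiny D}}=a^{\scriptsize\textcircled{\tiny D}}$.
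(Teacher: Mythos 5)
Your proposal is correct, and while it enters through the same door as the paper's proof --- Theorem 2.7 --- the mechanism of its second half is genuinely different. The paper applies Theorem 2.7 twice: once to $a$, giving $(a^{\scriptsize\textcircled{\tiny D}})^{\scriptsize\textcircled{\tiny D}}=a^2a^{\scriptsize\textcircled{\tiny D}}$, and once more to the element $a^{\scriptsize\textcircled{\tiny D}}$ itself, giving $((a^{\scriptsize\textcircled{\tiny D}})^{\scriptsize\textcircled{\tiny D}})^{\scriptsize\textcircled{\tiny D}}=(a^{\scriptsize\textcircled{\tiny D}})^2a^2a^{\scriptsize\textcircled{\tiny D}}$; it then collapses this product to $a^{\scriptsize\textcircled{\tiny D}}$ by a computation with the index-$m$ identities of Lemma 2.1, namely $a^2a^{\scriptsize\textcircled{\tiny D}}=a^{m+1}(a^{\scriptsize\textcircled{\tiny D}})^m$ and $a^m(a^{\scriptsize\textcircled{\tiny D}})^m=aa^{\scriptsize\textcircled{\tiny D}}$. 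You apply Theorem 2.7 only once, to identify $z=(a^{\scriptsize\textcircled{\tiny D}})^{\scriptsize\textcircled{\tiny D}}=a^2a^{\scriptsize\textcircled{\tiny D}}=x^{\tiny\textcircled{\tiny \#}}$ where $x=a^{\scriptsize\textcircled{\tiny D}}$, and then verify that $x$ satisfies the three defining equations of the pseudo core inverse of $z$ with $m=1$, concluding by the uniqueness in Theorem 2.2. Your route is computationally leaner: conditions $(\mathrm{I})$ and $(\mathrm{II})$ are literally two of the five core-inverse identities for the pair $(x,z)$ that Theorem 2.7 hands you, and you correctly isolated the only point of substance --- the core-inverse equations give $(xz)^*=xz$, not the needed $(zx)^*=zx$, which instead follows from the collapse $zx=a^2x^2=a(ax^2)=ax$ together with $(ax)^*=ax$. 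As a small bonus, your argument makes explicit that $I(a^2a^{\scriptsize\textcircled{\tiny D}})=1$, i.e. $(a^{\scriptsize\textcircled{\tiny D}})^{\scriptsize\textcircled{\tiny D}}$ is core invertible with core inverse $a^{\scriptsize\textcircled{\tiny D}}$. What the paper's version buys in exchange is structural brevity: it never needs to invoke the uniqueness theorem separately, the entire proof being a single chain of equalities.
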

\begin{proof}~Suppose $a\in R^{\scriptsize\textcircled{\tiny D}}$ with $I(a)=m$. By Theorem 2.7, we have
 $$((a^{\scriptsize\textcircled{\tiny D}})^{\scriptsize\textcircled{\tiny D}})
 ^{\scriptsize\textcircled{\tiny D}}
=(a^{\scriptsize\textcircled{\tiny D}})^2(a^{\scriptsize\textcircled{\tiny D}})^{\scriptsize\textcircled{\tiny D}}
=(a^{\scriptsize\textcircled{\tiny D}})^2a^2a^{\scriptsize\textcircled{\tiny D}}.$$
Since $(a^{\scriptsize\textcircled{\tiny D}})^2a^2a^{\scriptsize\textcircled{\tiny D}}
=(a^{\scriptsize\textcircled{\tiny D}})^2a^{m+1}(a^{\scriptsize\textcircled{\tiny D}})^m
=a^{\scriptsize\textcircled{\tiny D}}a^m(a^{\scriptsize\textcircled{\tiny D}})^m
=a^{\scriptsize\textcircled{\tiny D}}aa^{\scriptsize\textcircled{\tiny D}}=a^{\scriptsize\textcircled{\tiny D}},$
then~$((a^{\scriptsize\textcircled{\tiny D}})^{\scriptsize\textcircled{\tiny D}})^{\scriptsize\textcircled{\tiny D}}=a^{\scriptsize\textcircled{\tiny D}}.$
\end{proof}
\vspace{4mm}

Let $a\in R^D$ with $i(a)=m$. The sum $a=c_a+n_a$ is called the core nilpotent decomposition \cite{D2004} of $a$, where $c_a=aa^Da$ is the core part of $a$, and~$n_a=(1-aa^D)a$ is the nilpotent part of $a$. This decomposition brings $n^m_a=0$, $c_an_a=n_ac_a=0$, and $c_a\in R^{\#}$ with $c_a^{\#}=a^D$ \cite{D2004}.
\begin{thm}~Let $a\in R$. Then $a\in R^{\scriptsize\textcircled{\tiny D}}$ with $I(a)=m$ if and only if $a\in R^D$ with $i(a)=m$ and $c_a\in R^{\tiny\textcircled{\tiny \#}}$. In this case, $a^{\scriptsize\textcircled{\tiny D}}$ coincides with $c_a^{\tiny\textcircled{\tiny \#}}$.
\end{thm}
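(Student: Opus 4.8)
The plan is to use Theorem 2.3 as a bridge. That theorem already identifies pseudo core invertibility of $a$ (with $I(a)=m$) as $a\in R^{D}$ with $i(a)=m$ together with $a^{k}\in R^{\{1,3\}}$ for $k\geq m$, and it supplies the formula $a^{\scriptsize\textcircled{\tiny D}}=a^{D}a^{k}(a^{k})^{(1,3)}$. Both sides of the asserted equivalence presuppose $a\in R^{D}$ with $i(a)=m$ (on the pseudo-core side this comes from Lemma \ref{2.1}(4) and Remark 2.4), so the whole statement reduces to showing, under this hypothesis, that $a^{k}\in R^{\{1,3\}}$ (for $k\geq m$) is equivalent to $c_a\in R^{\tiny\textcircled{\tiny \#}}$, and that the two inverses agree.

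The key computational step, and the one I expect to be the main obstacle, is the power identity $c_a^{k}=a^{k}$ for every $k\geq m$. First I would rewrite $c_a=aa^{D}a=a^{2}a^{D}$ using $aa^{D}=a^{D}a$, and prove by induction that $c_a^{k}=a^{k+1}a^{D}$ for all $k\geq 1$; the inductive step relies on the two Drazin identities $a^{D}a^{2}=a^{2}a^{D}$ and $a(a^{D})^{2}=a^{D}$. Then, for $k\geq m=i(a)$, the Drazin relation $a^{k+1}a^{D}=a^{k}$ collapses this to $c_a^{k}=a^{k}$. All the genuine content sits here; once the identity is in hand the rest is formal.

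Next I would apply Theorem 2.3 to $c_a$ itself. Since $c_a\in R^{\#}$ with $c_a^{\#}=a^{D}$ (a recalled property of the core--nilpotent decomposition), we have $i(c_a)=1$, so Drazin invertibility of $c_a$ is automatic and, by Remark 2.4 together with the remark after Definition \ref{1.2} that pseudo core index $1$ means core invertibility, $c_a\in R^{\tiny\textcircled{\tiny \#}}$ is equivalent to $c_a^{k}\in R^{\{1,3\}}$ for every (equivalently some) $k\geq 1$, in particular for $k\geq m$. Combining this with $c_a^{k}=a^{k}$ gives $c_a\in R^{\tiny\textcircled{\tiny \#}}\Leftrightarrow a^{k}\in R^{\{1,3\}}$, which together with Theorem 2.3 applied to $a$ closes the equivalence.

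Finally, for the expression I would compare the two formulas produced by Theorem 2.3: applied to $a$ it gives $a^{\scriptsize\textcircled{\tiny D}}=a^{D}a^{k}(a^{k})^{(1,3)}$, while applied to $c_a$ (with $i(c_a)=1$ and $k\geq m$) it gives $c_a^{\tiny\textcircled{\tiny \#}}=c_a^{D}c_a^{k}(c_a^{k})^{(1,3)}$. Substituting $c_a^{D}=c_a^{\#}=a^{D}$ and $c_a^{k}=a^{k}$, and choosing the same $\{1,3\}$-inverse $(c_a^{k})^{(1,3)}=(a^{k})^{(1,3)}$, makes the two right-hand sides literally identical, whence $a^{\scriptsize\textcircled{\tiny D}}=c_a^{\tiny\textcircled{\tiny \#}}$. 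Thus everything downstream of the power identity is bookkeeping with Theorem 2.3.
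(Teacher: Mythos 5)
Your proposal is correct; every step can be justified from the results it cites, but it is organized differently from the paper's proof, so a comparison is worth giving. The paper treats the two directions asymmetrically: the forward implication is a direct computation verifying that $x=a^{\scriptsize\textcircled{\tiny D}}$ satisfies $xc_a^2=c_a$, $c_ax^2=x$ and $(c_ax)^*=c_ax$, so that $a^{\scriptsize\textcircled{\tiny D}}$ is literally exhibited as the core inverse of $c_a$; only the converse goes through Theorem 2.3, using $a^m=c_a^m$ (obtained from $a=c_a+n_a$ with $c_an_a=n_ac_a=0$ and $n_a^m=0$) and the chain $a^{\scriptsize\textcircled{\tiny D}}=a^Da^m(a^m)^{(1,3)}=c_a^{\#}c_ac_a^{(1,3)}=c_a^{\tiny\textcircled{\tiny \#}}$. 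You instead run both directions through Theorem 2.3, applied once to $a$ and once to $c_a$, and you prove the power identity by induction from the Drazin relations ($c_a^k=a^{k+1}a^D=a^k$ for $k\geq m$) rather than from the orthogonal decomposition; both proofs of that identity are equally short and valid. Your symmetric route buys uniformity --- no ad hoc verification anywhere --- and it sidesteps the identity $c_a^m(c_a^m)^{(1,3)}=c_ac_a^{(1,3)}$, which the paper uses without proof. The price is two small points you should spell out in a final write-up: (i) you need both directions of the equivalence between $c_a\in R^{\tiny\textcircled{\tiny \#}}$ and $c_a\in R^{\scriptsize\textcircled{\tiny D}}$ with $I(c_a)=1$, whereas Remark 1.3(1) states only one of them; the other holds because the three-equation characterization of the core inverse recalled in the Introduction is exactly equations $(\mathrm{I})$--$(\mathrm{III})$ with $m=1$; (ii) equating $a^Da^k(a^k)^{(1,3)}$ with $c_a^Dc_a^k(c_a^k)^{(1,3)}$ by choosing the same $\{1,3\}$-inverse is legitimate only because each expression is independent of that choice, which follows from the uniqueness of the pseudo core inverse (Theorem 2.2), together with the fact that $c_a^k$ and $a^k$ are the same element and hence have the same $\{1,3\}$-inverses. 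With these two remarks made explicit, your argument is complete.
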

\begin{proof}~Supposing $a\in R^{\scriptsize\textcircled{\tiny D}}$ with $I(a)=m$, we have $a\in R^D$ with $i(a)=m$ by Theorem 2.3 and
\begin{equation*}
\begin{aligned}
&a^{\scriptsize\textcircled{\tiny D}}c_a^2=a^{\scriptsize\textcircled{\tiny D}}(aa^Da)^2
  =a^{\scriptsize\textcircled{\tiny D}}a^3a^D=a^{\scriptsize\textcircled{\tiny D}}a^{m+2}(a^D)^m
  =a^{m+1}(a^D)^m=a^2a^D=aa^Da=c_a;\\
&c_a(a^{\scriptsize\textcircled{\tiny D}})^2=aa^Da(a^{\scriptsize\textcircled{\tiny D}})^2
  =aa^Da^{\scriptsize\textcircled{\tiny D}}=a^{\scriptsize\textcircled{\tiny D}};\\
&c_aa^{\scriptsize\textcircled{\tiny D}}=aa^Daa^{\scriptsize\textcircled{\tiny D}}
  =aa^{\scriptsize\textcircled{\tiny D}},~\mbox{which~implies~}(c_aa^{\scriptsize\textcircled{\tiny D}})^*=c_aa^{\scriptsize\textcircled{\tiny D}}.\\
\mbox{We~thus}&\mbox{~get}~c_a^{\tiny\textcircled{\tiny \#}}=a^{\scriptsize\textcircled{\tiny D}}.~~~~~~~~~~~~~~~~~~~~~~~~~~~~~~~~~~~~~~~~~~~~~~~~~~~~~~~~~~
\end{aligned}
\end{equation*}

Conversely, suppose $a\in R^D$ with $i(a)=m$, and suppose $c_a\in R^{\tiny\textcircled{\tiny \#}}$ which gives $c_a\in R^{\{1,3\}}$ and~$c_a^m\in R^{\{1,3\}}$.
Since $a=c_a+n_a,~c_an_a=n_ac_a=0$, and $n^m_a=0$, then $a^m=c^m_a$.
So we get $a^m\in R^{\{1,3\}}$, and $a^m(a^m)^{(1,3)}=c_a^m(c_a^m)^{(1,3)}=c_ac_a^{(1,3)}$. From Theorem 2.3, it follows that $a\in R^{\scriptsize\textcircled{\tiny D}}$, $a^{\scriptsize\textcircled{\tiny D}}=a^Da^m(a^m)^{(1,3)}=c_a^{\#}c_ac_a^{(1,3)}=c_a^{\tiny\textcircled{\tiny \#}}$ and $I(a)=m$.
\end{proof}
\vspace{4mm}

The notion of core-EP inverse \cite{DDDD2014} was introduced by Manjunatha Prasad and Mohana for a complex matrix which is not essentially of index 1 in 2014. This extends the notion of core inverse, which was initially defined for the matrices of index 1. Let $A, G\in \mathbb{C}^{n\times n}$, $G$ is a core-EP inverse of $A$ if $GAG=G$ and
$$\mathcal{C}(G)=\mathcal{C}(G^*)=\mathcal{C}(A^d),$$ where $d$ is the index of $A$ and $\mathcal{C}(G)$ is the column space of $G$.
The left annihilator of $a\in R$ is denoted by $^\circ{a}$ and is defined by $^\circ{a}=\{x\in R: xa=0\}$. We will also use the notation $aR=\{ax|x\in R\}$.

If we particularize the following Theorem 2.10 to the ring composed of complex $n\times n$ matrices, then $(1)\Leftrightarrow(2)$ indicates that the pseudo core inverse of a complex matrix coincides with its core-EP inverse. In other words, the notion of pseudo core inverse generalizes the notion of core-EP inverse from matrices to an arbitrary $*$-ring, in terms of equations.
\begin{thm}~Let $a,~x\in R$. Then the following conditions are equivalent:\\
$(1)$~$a^{\scriptsize\textcircled{\tiny D}}=x$;\\
$(2)$~$xax=x$ and $xR=x^*R=a^mR$ for some positive integer $m$;\\
$(3)$~$xax=x$, $xR=a^mR$ and $a^mR \subseteq x^*R$ for some positive integer $m$;\\
$(4)$~$xax=x$, $xR=a^mR$ and $^\circ(x^*)\subseteq\!\!~^\circ(a^m)$ for some positive integer $m$;\\
$(5)$~$xax=x$, $^\circ x=\!\!~^\circ(a^m)$ and $^\circ(x^*)\subseteq\!\!~^\circ(a^m)$ for some positive integer $m$.
\end{thm}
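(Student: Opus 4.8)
The plan is to prove the cycle of implications $(1)\Rightarrow(2)\Rightarrow(3)\Rightarrow(4)\Rightarrow(5)\Rightarrow(1)$. Among these, the four ``forward'' steps $(1)\Rightarrow(2)\Rightarrow(3)\Rightarrow(4)\Rightarrow(5)$ progressively weaken the two range equalities into one-sided annihilator inclusions and are essentially formal, so the genuine content sits in the closing step $(5)\Rightarrow(1)$, which must reconstruct the full defining equations $(\mathrm{I})$--$(\mathrm{III})$ from the weakest hypotheses. That is where I expect the main obstacle.

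For $(1)\Rightarrow(2)$ I would set $x=a^{\scriptsize\textcircled{\tiny D}}$ and invoke Lemma \ref{2.1}, which already supplies $xax=x$ together with $ax=a^mx^m$ and $a^mx^ma^m=a^m$. Writing $(\mathrm I)$ as $a^m=xa^{m+1}$ and $(\mathrm{II})$ as $x=ax^2=(ax)x=a^mx^{m+1}$ gives $a^mR\subseteq xR$ and $xR\subseteq a^mR$, hence $xR=a^mR$. For the $x^{*}$-range I would exploit self-adjointness $(\mathrm{III})$ in the form $ax=(ax)^{*}=x^{*}a^{*}$: then $x^{*}=(xax)^{*}=x^{*}a^{*}x^{*}=(ax)x^{*}=a^mx^mx^{*}\in a^mR$, and conversely $a^m=a^mx^ma^m=(ax)a^m=x^{*}a^{*}a^m\in x^{*}R$, so $x^{*}R=a^mR$. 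The three weakenings are then one-liners from the elementary fact that a right-ideal inclusion $bR\subseteq cR$ forces the reverse left-annihilator inclusion $^{\circ}c\subseteq\,^{\circ}b$ (and an equality forces an equality): $(2)\Rightarrow(3)$ merely drops $x^{*}R=a^mR$ to $a^mR\subseteq x^{*}R$; $(3)\Rightarrow(4)$ turns $a^mR\subseteq x^{*}R$ into $^{\circ}(x^{*})\subseteq\,^{\circ}(a^m)$; and $(4)\Rightarrow(5)$ turns $xR=a^mR$ into $^{\circ}x=\,^{\circ}(a^m)$, the clause $xax=x$ and any surviving clause being carried along verbatim.

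For the crux $(5)\Rightarrow(1)$, note first that $xax=x$ makes $x$ regular with idempotents $p=ax$, $q=xa$, and gives $^{\circ}x=R(1-q)$ and $^{\circ}(x^{*})=R(1-p^{*})$ since $a$ (resp.\ $a^{*}$) is an inner inverse of $x$ (resp.\ $x^{*}$). Because $(1-q)x=x-xax=0$, we get $1-q\in\,^{\circ}x=\,^{\circ}(a^m)$, whence $(1-q)a^m=0$, that is $xa^{m+1}=a^m$, which is exactly $(\mathrm I)$. Similarly $(1-p^{*})x^{*}=x^{*}-x^{*}a^{*}x^{*}=0$ gives $1-p^{*}\in\,^{\circ}(x^{*})\subseteq\,^{\circ}(a^m)$, so $(ax)^{*}a^m=a^m$; and feeding this membership $1-p^{*}\in\,^{\circ}(a^m)=\,^{\circ}x$ back through the equality of $(5)$ yields $(ax)^{*}x=x$. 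What remains are $(\mathrm{II})$ $ax^{2}=x$ and $(\mathrm{III})$ $(ax)^{*}=ax$, and I would reduce both to the single missing inclusion $x\in a^mR$: granting $xR=a^mR$ (together with $a^mR\subseteq x^{*}R$, already in hand from $a^m=x^{*}a^{*}a^m$) places us back in situation $(3)$, and $(3)\Rightarrow(1)$ is comparatively direct---indeed $(\mathrm I)$ there follows from $a^m=xt\in xR$ via $xa^{m+1}=xa(xt)=(xax)t=xt=a^m$, and the remaining equations follow from the range data by the standard projection argument, with $(\mathrm{III})$ using the $x^{*}$-range to identify $ax$ with the self-adjoint idempotent onto $a^mR$.

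The main obstacle is therefore exactly the passage $(5)\Rightarrow(3)$, i.e.\ recovering the single right-ideal inclusion $x\in a^mR$ (equivalently the projection identity $ax\cdot a^m=a^m$) from the purely one-sided annihilator hypotheses of $(5)$. Unlike the formal forward steps, this recovery genuinely requires the involution---it must use the hypotheses on $^{\circ}x$ and on $^{\circ}(x^{*})$ simultaneously, since in a general ring a left-annihilator equality alone does not determine the corresponding right ideal---and it seems to require the von Neumann regularity of $a^m$. Concretely I would attempt to exhibit $a^m$ as regular and realize $ax$ as $a^m(a^m)^{(1,3)}$, thereby forcing both $x\in a^mR$ and the self-adjointness of $ax$; this is the step I expect to absorb essentially all of the work in the theorem.
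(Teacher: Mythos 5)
Your forward chain $(1)\Rightarrow(2)\Rightarrow(3)\Rightarrow(4)\Rightarrow(5)$ is correct and matches the paper (which likewise treats the three weakenings as routine). The genuine gap is in $(5)\Rightarrow(1)$, the only implication with real content, and you yourself flag it as unresolved. After correctly extracting $xa^{m+1}=a^m$, $(ax)^*a^m=a^m$ and $(ax)^*x=x$ from the annihilator hypotheses, you propose to recover the right-ideal inclusion $x\in a^mR$ (so as to fall back on condition $(3)$) by exhibiting $a^m$ as regular and realizing $ax$ as $a^m(a^m)^{(1,3)}$ --- and you leave this as an ``attempt.'' As sketched it is circular: the natural $\{1,3\}$-inverse of $a^m$ is $x^m$ (via $ax=a^mx^m$, $a^mx^ma^m=a^m$, $(a^mx^m)^*=a^mx^m$), but those identities come from Lemma \ref{2.1} \emph{together with} equations $(\mathrm{II})$ and $(\mathrm{III})$, i.e.\ exactly the equations you are still trying to establish; moreover your fallback $(3)\Rightarrow(1)$ is itself only waved at (``standard projection argument''). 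So as written the proposal does not prove the key implication.

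The missing idea is that no detour through $(3)$ is needed: the same annihilator transfer you already used twice finishes the job in two more lines. Post-multiply $(ax)^*a^m=a^m$ by $a$ to get $((ax)^*a-a)a^m=0$, so $(ax)^*a-a\in{}^\circ(a^m)={}^\circ(x)$, whence $(ax)^*ax=ax$. The left-hand side is Hermitian, since $\bigl((ax)^*ax\bigr)^*=(ax)^*ax$; therefore $(ax)^*=ax$, which is $(\mathrm{III})$. Now $(ax)^*a^m=a^m$ reads $axa^m=a^m$, so $ax-1\in{}^\circ(a^m)={}^\circ(x)$, giving $ax^2=x$, which is $(\mathrm{II})$. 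Combined with $xa^{m+1}=a^m$, which you had already derived, this gives $a^{\scriptsize\textcircled{\tiny D}}=x$ directly, with no appeal to regularity of $a^m$ or to any range inclusion. This is precisely how the paper closes the cycle, and it shows your assessment that the step ``seems to require the von Neumann regularity of $a^m$'' is mistaken.
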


\begin{proof}~$(1)\Rightarrow(2)$ Suppose $a^{\scriptsize\textcircled{\tiny D}}=x$~with $I(a)=m$, then by the definition of pseudo core inverse, we have $xax=x$ and
\begin{equation*}
\begin{aligned}
&xR\subseteq~x^*R~\mbox{since}~x=ax^2=(ax)x=(ax)^*x=x^*a^*x\in x^*R;~~~~~~~~~~~~\\
&x^*R\subseteq a^mR~\mbox{since}~x^*=(xax)^*=axx^*=a^mx^mx^*\in a^mR;\\
&a^mR\subseteq xR~\mbox{since}~a^m=xa^{m+1}\in xR.
\end{aligned}
\end{equation*}
Thus, $xR=x^*R=a^mR$.

\noindent It ia easy to check that $(2)\Rightarrow(3)$, $(3)\Rightarrow(4)$ and $(4)\Rightarrow(5)$ hold.

\noindent $(5)\Rightarrow(1)$~Note that $xa-1\in\!\!~^\circ(x)$ and $^\circ(x)=\!\!~^\circ(a^m)$, then we have $xa^{m+1}=a^m$.

From~$x^*a^*-1\in\!\!~^\circ(x^*)$ and $^\circ(x^*)\subseteq\!\!~^\circ(a^m)$, it follows that $(x^*a^*-1)a^m=0$, i.e., $(ax)^*a^m=a^m$.
Post-multiply this equality by $a$, then we get
$(ax)^*aa^m=aa^m$, which implies $(ax)^*a-a\in\!\!~^\circ(a^m)=\!\!~^\circ(x)$. Thus $(ax)^*ax=ax$.
Therefore $(ax)^*=ax$.

The equalities $(ax)^*a^m=a^m$, $(ax)^*=ax$ and $^\circ(x)=\!\!~^\circ(a^m)$ yield that $ax^2=x$.

\noindent Hence, we get $a^{\scriptsize\textcircled{\tiny D}}=x$.
\end{proof}
\vspace{4mm}

Let us recall that an element $a\in R$ is called strongly $\pi$-regular in $R$ if there exist $x,~y\in R$ and positive integers $p,~q$ such that $$a^p=a^{p+1}x,~~~~a^q=ya^{q+1}.$$
\begin{lem}\emph{\cite{D1958}}~Let $a\in R$. Then $a\in R^D$ if and only if it is strongly $\pi$-regular in $R$.
\end{lem}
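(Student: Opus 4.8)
The statement is classical, so the plan is to prove the two implications separately, dispatching the forward direction immediately and locating all the content in the converse. For the forward direction, if $a\in R^D$ with $i(a)=k$, I would simply take $x=y=a^D$ and $p=q=k$: using $aa^D=a^Da$ and $a^ka^Da=a^k$ one gets $a^{k+1}a^D=a^k(aa^D)=a^ka^Da=a^k$ and $a^Da^{k+1}=a^{k+1}a^D=a^k$, which is exactly the defining pair of relations for strong $\pi$-regularity. This is a one-line verification I would carry out first.

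For the converse, assume $a^p=a^{p+1}x$ and $a^q=ya^{q+1}$ and set $n=\max\{p,q\}$. First I would upgrade the relations to exponent $n$: multiplying $a^p=a^{p+1}x$ on the left by $a^{n-p}$ gives $a^n=a^{n+1}x$, and symmetrically $a^n=ya^{n+1}$. Iterating these (peeling off one factor at a time, as in the proof of Lemma \ref{2.1}(1)) clears the powers to $a^n=a^{2n}x^n$ and $a^n=y^na^{2n}$. Writing $u=a^n$, $s=x^n$, $t=y^n$, the two relations become $u=u^2s$ and $u=tu^2$, i.e. $u\in u^2R\cap Ru^2$. The goal is then reduced to showing that $u=a^n$ is group invertible.

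The crux — and the step I expect to be the main obstacle — is to manufacture the group inverse of $u$ out of the two \emph{one-sided} data $s$ and $t$. The decisive observation is that the left and right relations force $us$ and $tu$ to coincide: indeed $tu=tu^2s=us$ (using $tu^2=u$, then $u^2s=u$), so $p:=us=tu$ satisfies $p^2=(tu)(us)=tu^2s=us=p$, together with $up=u^2s=u$ and $pu=tu^2=u$; thus $p$ is an idempotent acting as a two-sided identity on $u$. Setting $v:=tus=ps=tp$, I would then verify $uv=(up)s=us=p$ and $vu=t(pu)=tu=p$, whence $uvu=pu=u$, $vuv=pv=p^2s=v$, and $uv=vu$. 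This exhibits $v=u^{\#}$, so $a^n\in R^{\#}$. Finally, the well-known equivalence that $a\in R^D$ if and only if $a^k\in R^{\#}$ for some positive integer $k$ \cite{D2002} promotes $a^n\in R^{\#}$ to $a\in R^D$, completing the converse. It is worth noting that this argument never needs the hard fact that one-sided strong $\pi$-regularity already implies Drazin invertibility; the two-sided hypothesis built into the definition is precisely what lets the single idempotent $p=us=tu$ emerge and pin down the group inverse.
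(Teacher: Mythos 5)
Your proof is correct; I checked the computations and they all go through: from $u=u^2s$ and $u=tu^2$ you rightly get $tu=t(u^2s)=(tu^2)s=us$, so $p:=us=tu$ is idempotent with $up=pu=u$, and $v:=tus$ then satisfies $uv=vu=p$, $uvu=u$, $vuv=v$, which is exactly the group inverse of $u=a^n$; the forward direction with $x=y=a^D$ and $p=q=i(a)$ is likewise fine. Note, however, that the paper does not prove this lemma at all: it is quoted from Drazin's 1958 paper \cite{D1958}, so there is no internal proof to compare against. Your route also differs from Drazin's classical one. He constructs the Drazin inverse directly from the one-sided data as an explicit expression (of the flavor $y^{k+1}a^k$ for large $k$) and verifies the three defining equations — the same style of computation the paper carries out in Lemma \ref{2.1}(4), though there the extra hypothesis $ax^2=x$ is available. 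You instead reduce everything to group invertibility of the power $a^n$ via the coincidence $us=tu$, and then invoke the equivalence $a\in R^D\Leftrightarrow a^k\in R^{\#}$ for some $k$, which the paper quotes from \cite{D2002}. What your approach buys is transparency: the single idempotent $p$ makes it plain why two pieces of one-sided data produce a two-sided inverse, and the verification is shorter than Drazin's. What it costs is self-containment: the \cite{D2002} equivalence is itself nontrivial (its hard direction uses the double-commutant property of the group inverse to show $a^{k-1}(a^k)^{\#}$ is the Drazin inverse of $a$). This is a legitimate dependence here — the paper itself uses that equivalence as known, and its standard proof does not pass through strong $\pi$-regularity, so there is no circularity — but if you wanted a proof in the spirit of the original citation, you would finish by exhibiting $a^D=a^{n-1}v$ explicitly rather than appealing to \cite{D2002}.
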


\begin{thm}~Let $a\in R$. Then $a\in R^{\scriptsize\textcircled{\tiny D}}$ if and only if
there exist $u,~v\in R$ and positive integers $p,~q$ such that $a^p=u(a^*)^{p+1}a^p,~a^q=va^{q+1}$.
\end{thm}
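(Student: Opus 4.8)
The plan is to run everything through Theorem 2.3, which says $a\in R^{\scriptsize\textcircled{\tiny D}}$ exactly when $a\in R^D$ and $a^k\in R^{\{1,3\}}$ for some $k\geq i(a)$, and through Lemma 2.12, which identifies $R^D$ with the strongly $\pi$-regular elements. Throughout I will use the standard characterization that, for any $c\in R$, one has $c\in R^{\{1,3\}}$ if and only if $c\in Rc^{*}c$: if $g=c^{(1,3)}$ then $cg=(cg)^{*}=g^{*}c^{*}$, so $c=cgc=g^{*}c^{*}c\in Rc^{*}c$; conversely, if $c=wc^{*}c$ then $c^{*}c=c^{*}c\,w^{*}w\,c^{*}c$ and one checks directly that $w^{*}wc^{*}$ is a $\{1,3\}$-inverse of $c$. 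The equation $a^{q}=va^{q+1}$ is already the left half of strong $\pi$-regularity, so the real content of the theorem is how the starred equation $a^{p}=u(a^{*})^{p+1}a^{p}$ encodes both the right half of strong $\pi$-regularity and the $\{1,3\}$-invertibility of a power.

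For necessity I would set $m=I(a)=i(a)$. Since $a\in R^{D}$, Lemma 2.12 gives $a^{m}=a^{D}a^{m+1}$, so $a^{q}=va^{q+1}$ holds with $v=a^{D}$, $q=m$. For the first equation, Theorem 2.3 gives $a^{m}\in R^{\{1,3\}}$; writing $g=(a^{m})^{(1,3)}$ and using self-adjointness of $a^{m}g$, I get $a^{m}=(a^{m}g)a^{m}=g^{*}(a^{*})^{m}a^{m}$. Also $a^{m}=a^{m+1}a^{D}$ yields $(a^{*})^{m}=(a^{D})^{*}(a^{*})^{m+1}$, and substituting gives $a^{m}=g^{*}(a^{D})^{*}(a^{*})^{m+1}a^{m}$, i.e. the first equation with $u=g^{*}(a^{D})^{*}$ and $p=m$.

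For sufficiency the hard part will be extracting right $\pi$-regularity, $a^{p}\in a^{p+1}R$, from $a^{p}=u(a^{*})^{p+1}a^{p}$. Applying $*$ gives $(a^{*})^{p}=(a^{*})^{p}a^{p+1}u^{*}$; left-multiplying by $a^{*}$ gives $(a^{*})^{p+1}=(a^{*})^{p+1}a^{p+1}u^{*}$; and right-multiplying the original equation by $a$ gives $u(a^{*})^{p+1}a^{p+1}=a^{p+1}$. Combining these three identities,
\[
a^{p}=u(a^{*})^{p+1}a^{p}=u\big[(a^{*})^{p+1}a^{p+1}u^{*}\big]a^{p}=\big[u(a^{*})^{p+1}a^{p+1}\big]u^{*}a^{p}=a^{p+1}u^{*}a^{p},
\]
so $a^{p}=a^{p+1}(u^{*}a^{p})$. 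Together with $a^{q}=va^{q+1}$ and Lemma 2.12 this gives $a\in R^{D}$; set $m=i(a)$.

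It then remains to produce a $\{1,3\}$-invertible power $a^{k}$ with $k\geq m$. Right-multiplying $a^{p}=u(a^{*})^{p+1}a^{p}$ by $a^{k-p}$ shows $a^{k}=u(a^{*})^{p+1}a^{k}\in R(a^{*})^{p}a^{k}$ for every $k\geq p$. Taking $*$ in the relation $a^{p}=a^{p+1}u^{*}a^{p}$ gives $(a^{*})^{p}=(a^{*})^{p}u(a^{*})^{p+1}\in R(a^{*})^{p+1}$, so $a^{*}$ is left $\pi$-regular and hence $(a^{*})^{p}\in R(a^{*})^{k}$ for all $k\geq p$. Feeding this into the previous inclusion yields $a^{k}\in R(a^{*})^{k}a^{k}=R(a^{k})^{*}a^{k}$, so $a^{k}\in R^{\{1,3\}}$ for every $k\geq p$. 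Choosing $k=\max\{p,m\}\geq m$ and invoking Theorem 2.3 finishes the argument. The only step that is not mechanical is the three-line substitution producing $a^{p}=a^{p+1}(u^{*}a^{p})$; it is used twice (to obtain $a\in R^{D}$ and, via its adjoint, the left $\pi$-regularity of $a^{*}$ needed to upgrade the $\{1,3\}$-inverse to the correct exponent), so I expect this to be the crux of the proof.
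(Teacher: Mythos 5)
Your proposal is correct and takes essentially the same route as the paper: necessity from the self-adjointness of $a^m$ times a $\{1,3\}$-inverse of $a^m$ (the paper uses $a^mx^m$ with $x=a^{\scriptsize\textcircled{\tiny D}}$, you use $(a^m)^{(1,3)}$ and $a^D$), and sufficiency via the characterization $c\in Rc^{*}c\Leftrightarrow c\in R^{\{1,3\}}$, Drazin's strong $\pi$-regularity lemma, and Theorem 2.3. The only real difference is economy: the paper notes immediately that $au^{*}$ is a $\{1,3\}$-inverse of $a^p$, which gives $a^p=a^{p+1}u^{*}a^p$ for free and lets it take $k=p$ (using $i(a)\leq p$), whereas you derive that identity by direct substitution and then prove $a^k\in R(a^k)^{*}a^k$ for every $k\geq p$ so as to avoid relying on the index bound.
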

\begin{proof}~Suppose $a^{\scriptsize\textcircled{\tiny D}}=x$ with $I(a)=m$, then
$$xa^{m+1}=a^m,~~ ax^2=x,~~ (ax)^*=ax.$$
By Lemma 2.1, we have $a^mx^ma^m=a^m,~(a^mx^m)^*=a^mx^m$ which yields
$a^m=(a^mx^m)^*a^m.$ \\
Therefore~$a^m=(x^m)^*(a^m)^*a^m=(ax^{m+1})^*(a^m)^*a^m=(x^{m+1})^*(a^{*})^{m+1}a^m.$
Consequently,  the necessity holds.
\vspace{2mm}\\
\indent Conversely, suppose $u,~v\in R$ and positive integers $p,~q$ exist such that $a^p=u(a^*)^{p+1}a^p$, $a^q=va^{q+1}$. Since
$a^p=u(a^*)^{p+1}a^p\in R(a^p)^*a^p,$ then $au^*$ is a \{1,3\}-inverse of $a^p$.
So $a^p=a^pau^*a^p=a^{p+1}u^*a^p$, together with $a^q=va^{q+1}$, by Lemma 2.11, implies that $a\in R^D$ with $i(a)\leq p$.
By Theorem 2.3, we get $a\in R^{\scriptsize\textcircled{\tiny D}}$, moreover $a^{\scriptsize\textcircled{\tiny D}}=a^Da^p(a^p)^{(1,3)}
=a^Da^pau^*=a^pu^*$.
\end{proof}
\vspace{4mm}

From the proof of Theorem 2.12 and its dual case, we have the following result.
\begin{thm}~Let $a\in R$. Then the following conditions are equivalent:\\
$(1)$ $a\in R^D$ and $a^m\in R^{\dag}$ for some positive integer $m\geq i(a)$;\\
$(2)$ $a\in R^{\scriptsize\textcircled{\tiny D}}\cap R_{\scriptsize\textcircled{\tiny D}}$;\\
$(3)$ $a^m\in a^m(a^*)^{m+1}R\cap R(a^*)^{m+1}a^m$ for some positive integer $m$.
\end{thm}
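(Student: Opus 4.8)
The plan is to run the equivalences as a cycle $(1)\Rightarrow(2)\Rightarrow(3)\Rightarrow(1)$, using Theorem 2.3 together with its dual to pass between (1) and (2), and the proof of Theorem 2.12 together with its dual to handle (3). Throughout I will exploit the classical fact that an element is Moore--Penrose invertible if and only if it possesses both a $\{1,3\}$-inverse and a $\{1,4\}$-inverse (a $\{1,4\}$-inverse of $b$ being a $y$ with $byb=b$ and $(yb)^{*}=yb$), so that $R^{\dag}=R^{\{1,3\}}\cap R^{\{1,4\}}$.

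For $(1)\Rightarrow(2)$, assume $a\in R^{D}$ and $a^{m}\in R^{\dag}$ with $m\geq i(a)$. Since the Moore--Penrose inverse of $a^{m}$ is in particular a $\{1,3\}$-inverse, we have $a^{m}\in R^{\{1,3\}}$, and as $m\geq i(a)$ Theorem 2.3 gives $a\in R^{\scriptsize\textcircled{\tiny D}}$; the $\{1,4\}$-inverse furnished by $a^{m}\in R^{\dag}$ together with the dual of Theorem 2.3 gives $a\in R_{\scriptsize\textcircled{\tiny D}}$, so (2) holds. For $(2)\Rightarrow(3)$ the crucial point is to align exponents: by Remark 2.4 the hypothesis $a\in R^{\scriptsize\textcircled{\tiny D}}\cap R_{\scriptsize\textcircled{\tiny D}}$ forces $I(a)=I^{\prime}(a)=i(a)=:m$. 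Applying the necessity half of Theorem 2.12 with $x=a^{\scriptsize\textcircled{\tiny D}}$ yields $a^{m}=(x^{m+1})^{*}(a^{*})^{m+1}a^{m}\in R(a^{*})^{m+1}a^{m}$, and the dual computation applied to $a_{\scriptsize\textcircled{\tiny D}}$ yields $a^{m}\in a^{m}(a^{*})^{m+1}R$ with the \emph{same} $m$. Intersecting these gives exactly (3).

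The heart of the matter, and where I expect the main obstacle, is $(3)\Rightarrow(1)$, since (3) records only the two ``starred'' relations and I must recover from them both the Drazin invertibility of $a$ and the two-sided invertibility of $a^{m}$. Write $a^{m}=t(a^{*})^{m+1}a^{m}$ and $a^{m}=a^{m}(a^{*})^{m+1}s$. Exactly as inside the proof of Theorem 2.12, the first relation shows that $at^{*}$ is a $\{1,3\}$-inverse of $a^{m}$ and, moreover, that $a^{m}=a^{m+1}t^{*}a^{m}$; dually, the second shows that $s^{*}a$ is a $\{1,4\}$-inverse of $a^{m}$ and that $a^{m}=a^{m}s^{*}a^{m+1}$. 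The two resulting identities $a^{m}=a^{m+1}(t^{*}a^{m})$ and $a^{m}=(a^{m}s^{*})a^{m+1}$ are precisely the one-sided strong $\pi$-regularity conditions, so Lemma 2.11 gives $a\in R^{D}$ with $i(a)\leq m$. Since $a^{m}$ now carries both a $\{1,3\}$- and a $\{1,4\}$-inverse, it lies in $R^{\dag}$, and $m\geq i(a)$; this is (1).

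The genuinely delicate step is thus the bridge in $(3)\Rightarrow(1)$: extracting the one-sided relation $a^{m}=a^{m+1}t^{*}a^{m}$ from $a^{m}=t(a^{*})^{m+1}a^{m}$ (and its dual). This is the same computation already carried out in establishing the sufficiency direction of Theorem 2.12, so reusing that verification is what makes the implication close; everything else is exponent bookkeeping controlled by Remark 2.4 and the standard identity $R^{\dag}=R^{\{1,3\}}\cap R^{\{1,4\}}$.
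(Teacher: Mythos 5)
Your proposal is correct and follows essentially the route the paper intends: the paper gives no separate argument for this theorem, stating only that it follows ``from the proof of Theorem 2.12 and its dual case,'' which is precisely what you carry out (extracting the $\{1,3\}$- and $\{1,4\}$-inverses of $a^m$ from the two starred relations, invoking Lemma 2.11 for Drazin invertibility, and using Theorem 2.3 with its dual together with $R^{\dag}=R^{\{1,3\}}\cap R^{\{1,4\}}$). Your explicit index bookkeeping via Remark 2.4 and the bound $i(a)\leq m$ fills in details the paper leaves implicit, but it is the same argument.
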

\vspace{4mm}

Add one more equation $axa=a~(\text{resp.}~a^2x=a)$ to the three equations which exactly define the pseudo core inverse, then we can observe that $a\in R^{\tiny\textcircled{\tiny \#}}$ with $a^{\tiny\textcircled{\tiny \#}}=a^{\scriptsize\textcircled{\tiny D}}$.
\begin{prop}~Let $a,~x\in R$. Then $(1)\Leftrightarrow(2)$, and $(3)\Rightarrow(1)$, where \\
$(1)~a^{\tiny\textcircled{\tiny \#}}=x;$\\
$(2)~xa^{m+1}=a^m,~ax^2=x,~(ax)^{*}=ax,~axa=a$~for some positive integer $m$;\\
$(3)~xa^{m+1}=a^m,~ax^2=x,~(ax)^{*}=ax,~a^2x=a$~for some positive integer $m$.
\end{prop}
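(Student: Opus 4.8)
The guiding observation is that equations $(\mathrm I)$--$(\mathrm{III})$ of Definition \ref{1.1} already sit inside each of $(1)$, $(2)$, $(3)$, and that the extra fourth equation in $(2)$ and $(3)$ is there precisely to force the Drazin index of $a$ down to $1$. So the plan is to prove that the fourth equation makes $a$ group invertible, and then to let the earlier results collapse the pseudo core inverse onto the core inverse: once $a\in R^{\#}$ we have $i(a)=1$, hence $I(a)=i(a)=1$ by Theorem 2.3 and Remark 2.4, and then Remark 1.3 identifies the (unique) pseudo core inverse with the core inverse. I shall also use, from the Xu--Chen--Zhang description recalled in the introduction, that $(\mathrm I)$ with $m=1$ is the equation $xa^{2}=a$ and that, together with $(\mathrm{II})$ and $(\mathrm{III})$, it is exactly the defining system of $a^{\tiny\textcircled{\tiny \#}}$.

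For $(1)\Rightarrow(2)$ I would merely read the four required equations off the five defining equations of the core inverse listed in the introduction: with $x=a^{\tiny\textcircled{\tiny \#}}$ the equations $axa=a$, $ax^{2}=x$ and $(ax)^{*}=ax$ are immediate, while $a^{\tiny\textcircled{\tiny \#}}a^{2}=a$ is $(\mathrm I)$ with $m=1$. Hence $(2)$ holds.

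I would treat $(2)\Rightarrow(1)$ and $(3)\Rightarrow(1)$ together. In both cases the first three relations are $(\mathrm I)$--$(\mathrm{III})$, so $a$ is pseudo core invertible with $x=a^{\scriptsize\textcircled{D}}$ by Theorem \ref{1}, and $a\in R^{D}$ by Lemma \ref{2.1}. From the fourth equation I extract $a\in a^{2}R$: in $(3)$ this is $a=a^{2}x$ outright, and in $(2)$ I use $ax=a^{2}x^{2}$ from Lemma \ref{2.1}$(1)$ to write $a=axa=a^{2}x^{2}a\in a^{2}R$. The remaining task, and the one I expect to be the real obstacle, is the index-reduction statement that an element which is both Drazin invertible and contained in $a^{2}R$ must be group invertible. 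Granting it, $i(a)=1$, so $I(a)=1$ as above and $x=a^{\scriptsize\textcircled{D}}=a^{\tiny\textcircled{\tiny \#}}$, which is $(1)$.

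To settle the index-reduction, write $a=a^{2}t$; an immediate induction that replaces $a^{n+1}$ by $a^{n+2}t$ gives $a=a^{k+1}t^{k}$ for every $k\geq 1$. Choosing $k=i(a)$ and left-multiplying by the idempotent $aa^{D}$, and using the standard identity $aa^{D}a^{k+1}=a^{k+1}$ valid for exponents at least $i(a)$, I obtain $aa^{D}a=a^{k+1}t^{k}=a$. Thus $a^{D}$ satisfies $aa^{D}a=a$ in addition to $aa^{D}=a^{D}a$ and $a^{D}aa^{D}=a^{D}$, so $a\in R^{\#}$ with $a^{\#}=a^{D}$, as needed. The rest is routine bookkeeping; I note that the proposition deliberately does not assert $(1)\Rightarrow(3)$, which is consistent with $a^{2}a^{\tiny\textcircled{\tiny \#}}$ failing to equal $a$ in general.
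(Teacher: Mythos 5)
Your proposal is correct, and while it rests on the same underlying idea as the paper --- the extra fourth equation forces $a$ to be group invertible, after which the pseudo core inverse collapses onto the core inverse --- the execution is genuinely different. The paper argues computationally: for $(2)\Rightarrow(1)$ it uses Lemma 2.1 to derive $x=a^Dax$, hence $a=axa=aa^D(axa)=aa^Da$, so $a\in R^{\#}$; it then observes that $xa^{m+1}=a^m$ degenerates to $xa^2=a$, identifies $a^{\#}=x^2a$ explicitly, and concludes via the Xu--Chen--Zhang formula $a^{\tiny\textcircled{\tiny \#}}=a^{\#}aa^{(1,3)}$ (with $a^{(1,3)}=x$, since $axa=a$ and $(ax)^*=ax$) that $a^{\tiny\textcircled{\tiny \#}}=(x^2a)ax=xax=x$; and it handles $(3)\Rightarrow(1)$ by reduction to $(2)$, showing $axa=a(xa^{m+1})x^m=a^{m+1}x^m=a$. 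You instead treat $(2)$ and $(3)$ uniformly by extracting $a\in a^2R$ from the fourth equation, prove from first principles the index-reduction fact that $a\in R^D$ together with $a=a^2t$ yields $aa^Da=aa^Da^{k+1}t^k=a^{k+1}t^k=a$ (so $a^{\#}=a^D$ and $i(a)=1$), and then conclude structurally: $x=a^{\scriptsize\textcircled{\tiny D}}$ by the uniqueness Theorem 2.2, $I(a)=i(a)=1$ by Remark 2.4, and the index-one pseudo core inverse is the core inverse by Remark 1.3, i.e.\ by the three-equation characterization of the core inverse recalled in the introduction. Your version buys modularity: both implications are dispatched at once, and the identification step reuses results already established instead of redoing computations; the paper's version is more self-contained at this spot, producing the explicit formula $a^{\#}=x^2a$ and never invoking uniqueness or the index remarks. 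Your only dependence worth flagging is Remark 1.3, which the paper states without proof, but since it is precisely the cited Xu--Chen--Zhang description of $a^{\tiny\textcircled{\tiny \#}}$ (also used by the paper's own proof), nothing is circular.
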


\begin{proof}~$(1)\Rightarrow(2)$~It is clear.
\vspace{2mm}\\
$(2)\Rightarrow(1)$~By~Lemma~2.1, we have $x=xax=xa^mx^m=x(xa^{m+1})x^m=x^2a^{m+1}x^m$,
~$\mbox{then}\\
x=x^{m+1}a^{2m}x^m~\mbox{by~induction}$.
So~$x=(x^{m+1}a^{m})a^mx^m=a^D(a^mx^m)=a^Dax$.
Thus,
$$a=axa=a(a^Dax)a=aa^D(axa)=aa^Da,$$
which~implies~$a\in R^{\#}.$
Hence~$xa^{m+1}=a^m$ becomes $xa^2=a$. From~$a^2x^2a=a=xa^2$, we get $a^{\#}=x^2a$.
Therefore~$a\in R^{\tiny\textcircled{\tiny \#}}~\text{with}~
a^{\tiny\textcircled{\tiny \#}}=a^{\#}aa^{(1,3)}=(x^2a)ax=xax=x$.
\vspace{2mm}\\
$(3)\Rightarrow(1)$~Since $a^2x=a$, then $a=a^{m+1}x^m$ by induction.
So we have $axa=a(xa^{m+1})x^m=a^{m+1}x^m=a.$
Therefore $(2)$ holds, then (1) holds.
\end{proof}

\begin{rem}~In Proposition $2.14$, $(1)$ may not imply $(3)$.

For example: take $R=\mathbb{C}^{2\times 2}$ with transpose as involution and let
$a=\begin{pmatrix}
           1&i\\
           0&0
   \end{pmatrix}\in R$. \\By a simple calculation,
$a^{\tiny\textcircled{\tiny \#}}=a^{\#}aa^{(1,3)}=\begin{pmatrix}
                                    1&0\\
                                    0&0
                                 \end{pmatrix}$,
but $a^2a^{\tiny\textcircled{\tiny \#}}=\begin{pmatrix}
                                                    1&0\\
                                                    0&0
                                                  \end{pmatrix}\neq a$.
\end{rem}
\vspace{0.4cm}

\section{Relations with other generalized inverses }
In this section, we wish to investigate the relationship between pseudo core inverse and other generalized inverses. Before that, let us mention two known definitions.
For $a, b\in R$, $a\leq_{\mathcal{H}}b \Leftrightarrow Ra\subseteq Rb ~\text{and}~aR\subseteq bR$ \cite{D2011}.
\begin{defn}\emph{\cite{D2011}}~Let $a, d, x\in R$. $x$ is the inverse of $a$ along $d$ if
$$xad=d=dax ~~~\text{and}~~~ x\leq_{\mathcal{H}}d.$$
\end{defn}

\begin{defn}\emph{\cite{D2012}}~Let $a, b, c, x\in R$. $x$ is the $(b,c)$-inverse of $a$ if
$$x\in bRx\cap xRc~~~\text{and}~~~ xab=b,~ cax=c.$$
\end{defn}

In what follows, we investigate the relationship between the pseudo core inverse and above introduced generalized inverses, respectively.
\begin{thm}~Let $a\in R$. $a$ is pseudo core invertible with $I(a)\leq m$ if and only if $a$ is invertible along $a^m(a^m)^{*}$, under the assumption $a^m\in R^{\{1,4\}}$. In this case, the pseudo core inverse of $a$ coincides with the inverse of $a$ along $a^m(a^m)^{*}$.
\end{thm}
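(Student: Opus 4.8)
The plan is to fix $d=a^m(a^m)^*$ together with a $\{1,4\}$-inverse $z$ of $a^m$ (so $a^mza^m=a^m$ and $za^m$ is Hermitian), and first record the bookkeeping identities that the hypothesis $a^m\in R^{\{1,4\}}$ supplies. Writing $p=za^m$, one checks that $p$ is a Hermitian idempotent with $a^mp=a^m$ and $p(a^m)^*=(a^m)^*$; consequently $a^m=d\,z^*$ and $(a^m)^*=z\,d$, and passing to principal ideals gives $dR=a^mR$ and $Rd=R(a^m)^*$, while trivially $d^*=d$. These four facts are the bridge that lets me move back and forth between the Hermitian objects $a^mR$ and $R(a^m)^*$ appearing in Theorem 2.10 and the principal ideals $dR$ and $Rd$ appearing in Definition 3.1 of the inverse along $d$.

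For the forward implication, suppose $a^{\scriptsize\textcircled{\tiny D}}=x$ with $I(a)=m_0\le m$. By Theorem 2.10 we have $xax=x$ and $xR=x^*R=a^{m_0}R$; since $i(a)=I(a)=m_0\le m$ the image stabilises, so $a^{m_0}R=a^mR$, giving $xR=a^mR=dR$, and taking adjoints of $x^*R=a^mR$ gives $Rx=R(a^m)^*=Rd$. From $xax=x$ together with $d\in xR$ and $d\in Rx$ one reads off $xad=d$ and $dax=d$, and $x\leq_{\mathcal{H}}d$ is immediate; alternatively $xad=d$ follows from $(\mathrm I)$ and $dax=d$ from the fact that $ax=a^mx^m$ is a Hermitian idempotent fixing $a^m$ (Lemma 2.1). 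Either way $x$ is the inverse of $a$ along $d$.

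For the converse, let $x$ be the inverse of $a$ along $d$, so $xad=d=dax$ and $x\leq_{\mathcal{H}}d$. The decisive step is to recover equation $(\mathrm I)$: multiplying $xad=d$ on the right by $z^*$ and using $d\,z^*=a^m$ gives $xa^{m+1}=a^m$. Because $x\in Rd$, the identity $dax=d$ at once yields $xax=x$; and since $xad=d$ and $dax=d$ force $d\in xR$ and $d\in Rx$, the inclusions of $x\leq_{\mathcal{H}}d$ sharpen to $xR=dR=a^mR$ and $Rx=Rd=R(a^m)^*$. Taking adjoints of the latter gives $x^*R=a^mR$, so $x$ meets condition $(2)$ of Theorem 2.10 exactly, whence $a^{\scriptsize\textcircled{\tiny D}}=x$; as $(\mathrm I)$ holds for this $m$, we get $I(a)\le m$.

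I expect the main obstacle to be the converse, and specifically the passage from the purely ideal-theoretic data of the inverse along $d$ to the algebraic equation $(\mathrm I)$. This is precisely where the hypothesis $a^m\in R^{\{1,4\}}$ is indispensable: without a Hermitian $\{1,4\}$-inverse of $a^m$ one can neither cancel the right factor $(a^m)^*$ in $xa^{m+1}(a^m)^*=a^m(a^m)^*$ nor identify $dR$ and $Rd$ with $a^mR$ and $R(a^m)^*$. Once the identity $a^m=d\,z^*$ is in hand, the remaining verifications are routine and are funneled through Theorem 2.10 rather than checked by a direct computation of $ax^2=x$ and $(ax)^*=ax$.
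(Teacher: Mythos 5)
Your proof is correct, and it is organized genuinely differently from the paper's, even though the single decisive computation coincides. The paper proves both directions by direct element manipulation: in the forward direction it establishes $Rx\subseteq Rd$ and $xR\subseteq dR$ via the explicit factorizations $x=x(x^m)^{*}(a^m)^{(1,4)}d$ and $x=d((a^m)^{(1,4)})^{*}x^{m+1}$; in the converse, after recovering $xa^{m+1}=a^m$, it writes $x=a^m(a^m)^{*}u$ and checks $(ax)^{*}=ax$ and $ax^{2}=x$ by hand. You instead funnel everything through the ideal-theoretic characterization of Theorem 2.10: forward via $(1)\Rightarrow(2)$ together with the stabilization $a^{m_0}R=a^{m}R$ (correct, since $a^{m_0}=a^{m}(a^{D})^{m-m_0}$ when $i(a)=I(a)=m_0\le m$, but this one-line justification should be said out loud), and converse by upgrading the inclusions in $x\leq_{\mathcal{H}}d$ to the equalities $xR=dR=a^{m}R$ and $Rx=Rd=R(a^m)^{*}$ and quoting $(2)\Rightarrow(1)$, so that equations (II) and (III) are never verified directly. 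Note that your key step---right-multiplying $xad=d$ by $z^{*}$ and using $dz^{*}=a^{m}$---is exactly the paper's chain $xa^{m+1}=xa^{m+1}(a^m)^{(1,4)}a^m=xa^{m+1}(a^m)^{*}((a^m)^{(1,4)})^{*}=a^m(a^m)^{*}((a^m)^{(1,4)})^{*}=a^m$, merely repackaged. The trade-off between the two routes: your bookkeeping identities $dz^{*}=a^{m}$ and $zd=(a^m)^{*}$ make transparent what the hypothesis $a^m\in R^{\{1,4\}}$ actually contributes (it identifies $dR$ with $a^{m}R$ and $Rd$ with $R(a^m)^{*}$), and the argument is shorter and more conceptual; the paper's version is self-contained and produces explicit formulas, whereas yours delegates the equational work behind (II) and (III) to the proof of Theorem 2.10 $(5)\Rightarrow(1)$.
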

\begin{proof}~Suppose $a^{\scriptsize\textcircled{\tiny D}}=x$ with $I(a)\leq m$, then~
$xa^{m+1}=a^m,~ax^2=x,~(ax)^{*}=ax.$\\
 Let $d=a^m(a^m)^{*}$, then we have
 \begin{equation*}
 \begin{aligned}
 xad&=xaa^m(a^m)^{*}=xa^{m+1}(a^m)^{*}=a^m(a^m)^{*}=d,\\ dax&=a^m(a^m)^{*}ax=a^m(a^m)^{*}(a^mx^m)^{*}=a^m(a^mx^ma^m)^{*}=a^m(a^m)^{*}=d.
\end{aligned}
\end{equation*}
Further,~$x = xax=x(ax)^{*}=x(a^mx^m)^{*}=x(x^m)^{*}(a^m)^{*}\\
\indent\indent\indent\!=x(x^m)^{*}(a^m(a^m)^{(1,4)}a^m)^{*}
=x(x^m)^{*}(a^m)^{(1,4)}a^m(a^m)^{*}\\
\indent\indent\indent\!\!=x(x^m)^{*}(a^m)^{(1,4)}d$, which implies $Rx\subseteq Rd$.\\

\indent\indent ~~$x=ax^2=a^mx^{m+1}=a^m(a^m)^{(1,4)}a^mx^{m+1}\\
\indent\indent\indent\!=a^m((a^m)^{(1,4)}a^m)^{*}x^{m+1}
=d((a^m)^{(1,4)})^{*}x^{m+1}$, which implies $xR\subseteq dR$.\\

\noindent From the above, $x$ is the inverse of $a$ along $ a^m(a^m)^*$.\\

Conversely,~suppose $x$ is the inverse of $a$ along $a^m(a^m)^*$, then we have $$(1)~xa^{m+1}(a^m)^{*}=a^m(a^m)^{*}=a^m(a^m)^{*}ax,
~(2)~x\in a^m(a^m)^{*}R \cap Ra^m(a^m)^{*}.$$
In order to show $a^{\scriptsize\textcircled{\tiny D}}=x$, we need to prove the following equalities.
\begin{equation*}
\begin{aligned}
xa^{m+1}&=xa^{m+1}(a^m)^{(1,4)}a^m=xa^{m+1}((a^m)^{(1,4)}a^m)^*=xa^{m+1}(a^m)^*((a^m)^{(1,4)})^*~~~~~~~~~~~~\\
&=a^m(a^m)^{*}((a^m)^{(1,4)})^*=a^m.
\end{aligned}
\end{equation*}

Since~$(a^m)^{*}=(a^m(a^m)^{(1,4)}a^m)^*
=(a^m)^{(1,4)}a^m(a^m)^*=(a^m)^{(1,4)}a^m(a^m)^*ax=(a^m)^*ax,$
then $a^m=(ax)^{*}a^m$.
There exists $u\in R$ such that $x=a^m(a^m)^{*}u$ since $x\in a^m(a^m)^{*}R$. So $ax=aa^m(a^m)^{*}u=a^ma(a^m)^{*}u=(ax)^{*}a^ma(a^m)^{*}u=(ax)^{*}aa^m(a^m)^{*}u=(ax)^{*}ax$.
Therefore $(ax)^{*}=ax$.
$$\text{From}~axa^m=a^m, \text{we~get~} ax^2=axa^m(a^m)^{*}u=a^m(a^m)^{*}u=x.~~~~~~~~~~~~~~~~~~~~~~~~~~~~~~$$

\noindent Hence, we get $a^{\scriptsize\textcircled{\tiny D}}=x$ with $I(a)\leq m$.
\end{proof}
\vspace{4mm}

\begin{thm}~Let $a\in R$.~$a$ is pseudo core invertible if and only if $a$ is $(a^m,(a^m)^{*})$ invertible for some positive integer $m$. In this case, the pseudo core inverse of $a$ coincides with the $(a^m,(a^m)^{*})$ inverse of $a$.
\end{thm}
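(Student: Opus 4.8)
The plan is to prove the equivalence by unwinding both definitions and exhibiting a direct translation between the equations defining the pseudo core inverse of $a$ and the equations defining the $(a^m,(a^m)^*)$-inverse of $a$. Recall from Definition 3.3 that $x$ is the $(b,c)$-inverse of $a$ with $b=a^m$ and $c=(a^m)^*$ precisely when $x\in a^mRx\cap xR(a^m)^*$ together with $xa\cdot a^m=a^m$ and $(a^m)^*\cdot ax=(a^m)^*$. So the task is to show that these four conditions, for a suitable $m$, are equivalent to the three defining equations $xa^{m+1}=a^m$, $ax^2=x$, $(ax)^*=ax$ of the pseudo core inverse.

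For the forward direction I would suppose $a^{\scriptsize\textcircled{\tiny D}}=x$ with $I(a)=m$ and verify each of the four $(b,c)$-inverse conditions. The equation $xa\cdot a^m=a^m$ is immediate since $xa^{m+1}=a^m$. For $(a^m)^*ax=(a^m)^*$, I would use the Hermitian symmetry $(ax)^*=ax$ together with the fact from Lemma 2.1 that $a^mx^ma^m=a^m$ and $ax=a^mx^m$; indeed $(a^m)^*ax=(a^m)^*a^mx^m=(a^mx^ma^m)^*{}^{\!}$-type manipulations give $(a^m)^*(ax)^*=(axa^m)^*=(a^m)^*$ once one checks $axa^m=a^m$, which follows from $ax=a^mx^m$ and $a^mx^ma^m=a^m$. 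The two membership conditions $x\in xR(a^m)^*$ and $x\in a^mRx$ would be read off exactly as in the proof of Theorem 2.10: from $x=x(ax)^*=x(x^m)^*(a^m)^*$ one gets $x\in xR(a^m)^*$, and from $x=ax^2=a^mx^{m+1}$ together with $a^m\in a^mRx$-type reasoning one gets $x\in a^mRx$.

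For the converse I would assume $x$ is the $(a^m,(a^m)^*)$-inverse and recover the three pseudo core equations. The hard part, which I expect to be the main obstacle, is extracting the Hermitian condition $(ax)^*=ax$ from the weaker membership and absorption conditions of the $(b,c)$-inverse; this is exactly the step where the proof of Theorem 3.2 had to do real work, passing through $(a^m)^*=(a^m)^*ax$ and then promoting it to $ax=(ax)^*ax$ via the representation $x=a^m(a^m)^*u$ coming from $x\in a^mR(\cdots)$. So I would mirror that argument: from $(a^m)^*ax=(a^m)^*$ deduce $a^m=(ax)^*a^m$, then write $x$ using the membership $x\in a^mRx$ (or the explicit form supplied by the $(b,c)$-definition) to show $ax=(ax)^*ax$, whence $(ax)^*=ax$. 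Once symmetry is in hand, $xa\cdot a^m=a^m$ gives $xa^{m+1}=a^m$ directly, and $ax^2=x$ follows by combining $axa^m=a^m$ with the representation of $x$ as a right multiple of $a^m(a^m)^*$, just as at the end of the proof of Theorem 3.2. Finally one concludes $a^{\scriptsize\textcircled{\tiny D}}=x$ and that the two inverses coincide, so I expect the whole argument to run closely parallel to Theorem 3.2, with the $(b,c)$-formalism replacing the inverse-along-an-element formalism.
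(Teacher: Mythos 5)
Your proposal is correct and follows essentially the same route as the paper's proof: the forward direction verifies the four $(a^m,(a^m)^*)$-inverse conditions via Lemma 2.1 exactly as the paper does, and the converse recovers $(ax)^*=ax$ from $(a^m)^*ax=(a^m)^*$ together with the membership $x\in a^mRx$ (writing $x=a^msx$), after which $xa^{m+1}=a^m$ and $ax^2=x$ follow. One small slip to correct: in your final step you invoke ``the representation of $x$ as a right multiple of $a^m(a^m)^*$,'' which is the hypothesis of Theorem 3.2, not of this theorem; here the available representation is $x=a^msx$ coming from $x\in a^mRx$, and it is exactly what is needed, since $ax^2=ax(a^msx)=(axa^m)sx=a^msx=x$.
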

\begin{proof}~Suppose $a^{\scriptsize\textcircled{\tiny D}}=x$ with $I(a)=m$, then we have~$$~xa^{m+1}=a^m,~~ax^2=x,~~(ax)^{*}=ax.$$
So~$x=ax^2=(ax)x=a^mx^mx\in a^mRx$,~$x=xax=x(a^mx^m)^*=x(x^m)^*(a^m)^*\in xR(a^m)^{*}$,\\
$xaa^m=xa^{m+1}=a^m$,~
$(a^m)^{*}ax=(a^m)^{*}$. Thus $x$ is the $(a^m,(a^m)^{*})$-inverse of $a$.\\

Conversely,~let $x$ be the $(a^m,(a^m)^{*})$ inverse of $a$, then there exist $s,~t\in R$ such that $$x=a^msx=xt(a^m)^{*},~~xaa^m=a^m,~~(a^m)^{*}ax=(a^m)^{*}.$$
So we have
\begin{equation*}
\begin{aligned}
&xa^{m+1}=a^m;\\
&(ax)^{*}=ax,~\mbox{due~to}~ax=aa^msx=a^{m}asx=(ax)^{*}a^masx=(ax)^{*}aa^msx=(ax)^{*}ax;\\
&ax^2=ax(a^msx)=(axa^m)sx=a^msx=x,~\mbox{on~account~of}~a^m=(ax)^{*}a^m=axa^m.
\end{aligned}
\end{equation*}
Thus, we get ~$a^{\scriptsize\textcircled{\tiny D}}=x$.
\end{proof}
\vspace{0.4cm}

\section{Reverse order law and additive property of the pseudo core inverse}
In this section, we show that the reverse order law for pseudo core inverse holds under certain conditions and investigate the pseudo core invertibility of the sum of two pseudo core invertible elements.

First, we give a crucial lemma.
\begin{lem}\emph{\cite{D2013}}~Let $a_i, b_i, c_i, y_i\in R~(i = 1, 2)$, and suppose that
each $a_i$ is $(b_i , c_i )$-invertible with $(b_i , c_i )$-inverse $y_i~(i = 1, 2)$. Then, for arbitrary
$d \in R$, $da_1 = a_2d$ and
$db_1 = b_2d$, $dc_1 = c_2d $
together imply that $y_{2}d = dy_{1}$.
\end{lem}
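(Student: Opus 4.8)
The plan is to extract from the $(b,c)$-inverse axioms three standard identities and then finish with a short chain of absorption equalities, using each of the three intertwining hypotheses exactly once. First I would record, for $i=1,2$, that \textbf{(a)} $y_i$ is an outer inverse, $y_ia_iy_i=y_i$; \textbf{(b)} $y_iR=b_iR$; and \textbf{(c)} $Ry_i=Rc_i$. For (a), writing $y_i=b_iu_iy_i$ (available since $y_i\in b_iRy_i$) gives $y_ia_iy_i=(y_ia_ib_i)u_iy_i=b_iu_iy_i=y_i$, where I used $y_ia_ib_i=b_i$. For (b), $y_i\in b_iRy_i\subseteq b_iR$ yields $y_iR\subseteq b_iR$, while $b_i=y_ia_ib_i\in y_iR$ yields $b_iR\subseteq y_iR$; item (c) is the dual argument using $y_i\in y_iRc_i$ together with $c_i=c_ia_iy_i$.

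Next I would convert the hypotheses $db_1=b_2d$ and $dc_1=c_2d$ into the two membership facts $dy_1\in y_2R$ and $y_2d\in Ry_1$. For the first, since $y_1\in b_1R$ by (b), write $y_1=b_1v$; then $dy_1=db_1v=b_2dv\in b_2R=y_2R$. For the second, since $y_2\in Rc_2$ by (c), write $y_2=wc_2$; then $y_2d=wc_2d=wdc_1\in Rc_1=Ry_1$. Combining these with the outer-inverse identity (a) gives two absorptions: from $dy_1\in y_2R$ and $y_2a_2y_2=y_2$ one gets $y_2a_2(dy_1)=dy_1$, and from $y_2d\in Ry_1$ and $y_1a_1y_1=y_1$ (so $a_1y_1$ is a right identity on $Ry_1$) one gets $(y_2d)a_1y_1=y_2d$.

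Finally I would chain these together, invoking $da_1=a_2d$ in the middle:
$$y_2d=(y_2d)a_1y_1=y_2(da_1)y_1=y_2(a_2d)y_1=y_2a_2(dy_1)=dy_1,$$
which is the desired conclusion.

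The computation itself is routine; the only real content is the reduction carried out in the second paragraph. The point to get right is the asymmetry between $b$ and $c$: the condition on $b$ controls the right ideal $y_iR$ and hence lets $d$ push $y_1$ into $y_2R$, a statement naturally about $dy_1$, whereas the condition on $c$ controls the left ideal $Ry_i$ and so governs $y_2d$. Matching each intertwining hypothesis to the correct one-sided absorption is the crux; once that pairing is set up, the single substitution $da_1=a_2d$ closes the argument.
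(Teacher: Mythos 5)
Your proof is correct, and every step checks out against Definition 3.2: the outer-inverse identity $y_ia_iy_i=y_i$, the ideal equalities $y_iR=b_iR$ and $Ry_i=Rc_i$, the two memberships $dy_1\in y_2R$ and $y_2d\in Ry_1$, and the closing chain $y_2d=(y_2d)a_1y_1=y_2(a_2d)y_1=y_2a_2(dy_1)=dy_1$ are all valid and use only semigroup multiplication, as they should. Note that the paper itself gives no proof of this lemma --- it is quoted verbatim from Drazin's 2013 paper on commuting properties of generalized inverses --- and your argument is essentially the standard one from that source, so there is nothing to flag beyond agreement.
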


Applying the above lemma, we obtain the following result.
\begin{prop}~Let $a, x\in R$ with $ax=xa,~a^*x=xa^*$. If $a\in R^{\scriptsize\textcircled{\tiny D}}$, then $a^{\scriptsize\textcircled{\tiny D}}x=xa^{\scriptsize\textcircled{\tiny D}}$.
\end{prop}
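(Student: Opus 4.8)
The plan is to read this off Lemma 4.1 together with the characterization in Theorem 3.4. Set $m=I(a)$. By Theorem 3.4, since $a\in R^{\scriptsize\textcircled{\tiny D}}$, the element $a$ is $(a^m,(a^m)^{*})$-invertible and its $(a^m,(a^m)^{*})$-inverse is exactly $a^{\scriptsize\textcircled{\tiny D}}$. So I would instantiate Lemma 4.1 with the \emph{same} data on both indices: $a_1=a_2=a$, $b_1=b_2=a^m$, $c_1=c_2=(a^m)^{*}$, and correspondingly $y_1=y_2=a^{\scriptsize\textcircled{\tiny D}}$, taking the intertwining element to be $d=x$.

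To apply the lemma I must check its three commutation hypotheses $da_1=a_2d$, $db_1=b_2d$, $dc_1=c_2d$, which here read $xa=ax$, $xa^m=a^mx$, and $x(a^m)^{*}=(a^m)^{*}x$. The first is given outright. The second follows from $ax=xa$ by an immediate induction on the exponent. For the third, I would use the involution identity $(a^m)^{*}=(a^{*})^{m}$ and the hypothesis $a^{*}x=xa^{*}$: the latter gives $(a^{*})^{m}x=x(a^{*})^{m}$ by induction, hence $x(a^m)^{*}=x(a^{*})^{m}=(a^{*})^{m}x=(a^m)^{*}x$. With all three verified, Lemma 4.1 yields $y_2d=dy_1$, that is, $a^{\scriptsize\textcircled{\tiny D}}x=xa^{\scriptsize\textcircled{\tiny D}}$, which is precisely the claim.

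There is essentially no analytic obstacle here; the whole content is recognizing that the hypotheses $ax=xa$ and $a^{*}x=xa^{*}$ are exactly the intertwining relations needed to let $x$ commute through the $(b,c)$-data $b=a^m$ and $c=(a^m)^{*}$. The one point that requires a little care is the passage $x(a^m)^{*}=(a^m)^{*}x$, where the translation between $a^{*}x=xa^{*}$ and commutation with $(a^m)^{*}$ relies on the anti-homomorphism property of $*$ via $(a^m)^{*}=(a^{*})^{m}$; this is the only place the second hypothesis is used and it is the step I would state explicitly rather than leave to the reader. Everything else is bookkeeping in Lemma 4.1 with both indices set equal.
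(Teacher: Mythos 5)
Your proposal is correct and follows exactly the paper's own route: invoke Theorem 3.4 to identify $a^{\scriptsize\textcircled{\tiny D}}$ as the $(a^m,(a^m)^{*})$-inverse of $a$, verify the three commutation hypotheses from $ax=xa$ and $a^{*}x=xa^{*}$, and apply Lemma 4.1 with $d=x$ and both sets of data equal. The only difference is that you spell out the inductions and the identity $(a^m)^{*}=(a^{*})^{m}$, which the paper leaves implicit.
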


\begin{proof}~We know that $a\in R^{\scriptsize\textcircled{\tiny D}}$ if and only if
$a$ is $(a^m, (a^m)^*)$ invertible for some positive integer $m$ and in that case, the pseudo core inverse of $a$ coincides with the $(a^m,(a^m)^{*})$ inverse of $a$ (see the above Theorem 3.4).
From the condition $ax=xa,~a^*x=xa^*$, we have $a^mx=xa^m,~(a^m)^*x=x(a^m)^*$. According to Lemma~4.1, we get $a^{\scriptsize\textcircled{\tiny D}}x=xa^{\scriptsize\textcircled{\tiny D}}$.
\end{proof}
\vspace{4mm}

Applying Proposition 4.2, we obtain the following theorem.
\begin{thm}~Let $a, b \in R^{\scriptsize\textcircled{\tiny D}}$ with $ab=ba$ and $ab^*=b^*a$. Then $(ab)^{\scriptsize\textcircled{\tiny D}}=a^{\scriptsize\textcircled{\tiny D}}b^{\scriptsize\textcircled{\tiny D}}=b^{\scriptsize\textcircled{\tiny D}}a^{\scriptsize\textcircled{\tiny D}}$.
\end{thm}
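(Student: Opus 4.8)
Write $x=a^{\scriptsize\textcircled{\tiny D}}$, $y=b^{\scriptsize\textcircled{\tiny D}}$ and $c=ab=ba$, and set $m=I(a)$, $n=I(b)$. The plan is to verify directly that $w:=xy$ satisfies the three defining equations $(\mathrm{I})$–$(\mathrm{III})$ of Definition \ref{1.1} for $c$, while simultaneously checking $xy=yx$; by the uniqueness statement of Theorem \ref{1} this identifies $w$ with $c^{\scriptsize\textcircled{\tiny D}}$ and yields both asserted equalities at once. Everything hinges on first extracting enough commutation relations among $a,b,x,y$ and their adjoints, after which the three verifications reduce to routine rearrangements using the identities $xa^{m+1}=a^m$, $ax^2=x$, $(ax)^*=ax$ and their $b$-analogues.

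First I would harvest commutativity. Applying the involution to the hypotheses $ab=ba$ and $ab^*=b^*a$ gives $a^*b^*=b^*a^*$ and $a^*b=ba^*$ (equivalently $b^*a=ab^*$). Proposition 4.2 is then invoked four times: to $a$ with the commuting element $b$ (using $ab=ba$, $a^*b=ba^*$) to obtain $xb=bx$; to $a$ with $b^*$ (using $ab^*=b^*a$, $a^*b^*=b^*a^*$) to obtain $xb^*=b^*x$; to $b$ with $a$ (using $ba=ab$, $b^*a=ab^*$) to obtain $ya=ay$; and finally to $b$ with the element $x$ (using $bx=xb$ and $b^*x=xb^*$ from the previous two steps) to conclude $yx=xy$. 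Thus $a,b,x,y$ pairwise commute, and moreover $x$ commutes with $b^*$.

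With these relations the three equations follow. For $(\mathrm{I})$, take $l=\max\{m,n\}$; then $c^{l}=a^lb^l$, and using $ya=ay$ together with $xa^{l+1}=a^l$ and $yb^{l+1}=b^l$ one gets $wc^{l+1}=x\,a^{l+1}\,y\,b^{l+1}=a^lb^l=c^l$. For $(\mathrm{II})$, commutativity gives $w^2=x^2y^2$, whence $cw^2=ax^2\,by^2=xy=w$ by $ax^2=x$ and $by^2=y$. For $(\mathrm{III})$, observe that $cw=ab\,xy=ax\,by$, and that $ax$ and $by$ are self-adjoint idempotents (projections) which commute, since $ax\,by=ab\,xy=by\,ax$ by the relations just established; hence their product is again self-adjoint, i.e.\ $(cw)^*=cw$. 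Therefore $w=xy=yx=(ab)^{\scriptsize\textcircled{\tiny D}}$, giving $(ab)^{\scriptsize\textcircled{\tiny D}}=a^{\scriptsize\textcircled{\tiny D}}b^{\scriptsize\textcircled{\tiny D}}=b^{\scriptsize\textcircled{\tiny D}}a^{\scriptsize\textcircled{\tiny D}}$.

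The one genuinely non-obvious step is commuting the two pseudo core inverses $x$ and $y$, which forces me to first establish $xb^*=b^*x$; this is not available from $ab=ba$ alone. The key observation is that the second hypothesis $ab^*=b^*a$, paired with the adjoint $a^*b^*=b^*a^*$ of the first, is exactly the pair of conditions under which Proposition 4.2 applies to $a$ with the element $b^*$. Once $xb^*=b^*x$ is secured, the commuting-projection argument for $(\mathrm{III})$ and hence the whole theorem go through, and the remaining manipulations are pure bookkeeping with the defining identities.
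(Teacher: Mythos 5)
Your proposal is correct and takes essentially the same route as the paper: both use Proposition 4.2 repeatedly to harvest the commutation relations $a^{\scriptsize\textcircled{\tiny D}}b=ba^{\scriptsize\textcircled{\tiny D}}$, $b^{\scriptsize\textcircled{\tiny D}}a=ab^{\scriptsize\textcircled{\tiny D}}$, $a^{\scriptsize\textcircled{\tiny D}}b^{*}=b^{*}a^{\scriptsize\textcircled{\tiny D}}$, and then $a^{\scriptsize\textcircled{\tiny D}}b^{\scriptsize\textcircled{\tiny D}}=b^{\scriptsize\textcircled{\tiny D}}a^{\scriptsize\textcircled{\tiny D}}$, before verifying the three defining equations for the product with the maximum of the two indices. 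The only cosmetic differences are that you verify the equations for $a^{\scriptsize\textcircled{\tiny D}}b^{\scriptsize\textcircled{\tiny D}}$ where the paper uses $b^{\scriptsize\textcircled{\tiny D}}a^{\scriptsize\textcircled{\tiny D}}$, and your equation $(\mathrm{III})$ argument via commuting self-adjoint idempotents is just a repackaging of the paper's direct computation.
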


\begin{proof}~From Proposition 4.2, it follows that $$b^{\scriptsize\textcircled{\tiny D}}a=ab^{\scriptsize\textcircled{\tiny D}}~~~\text{and}~~~a^{\scriptsize\textcircled{\tiny D}}b=ba^{\scriptsize\textcircled{\tiny D}}.$$
The condition $b^{*}a=ab^{*}, ~a^{*}b^{*}=b^{*}a^{*}$ ensures that $b^{*}a^{\scriptsize\textcircled{\tiny D}}=a^{\scriptsize\textcircled{\tiny D}}b^{*}$, which together with $a^{\scriptsize\textcircled{\tiny D}}b=ba^{\scriptsize\textcircled{\tiny D}}$, implies that $a^{\scriptsize\textcircled{\tiny D}}b^{\scriptsize\textcircled{\tiny D}}=b^{\scriptsize\textcircled{\tiny D}}a^{\scriptsize\textcircled{\tiny D}}$.

Let~$t=$max$\{I(a), I(b)\}$, then we have
\begin{equation*}
\begin{aligned}
&b^{\scriptsize\textcircled{\tiny D}}a^{\scriptsize\textcircled{\tiny D}}
(ab)^{t+1}=b^{\scriptsize\textcircled{\tiny D}}a^{\scriptsize\textcircled{\tiny D}}a^{t+1}b^{t+1}
=b^{\scriptsize\textcircled{\tiny D}}a^tb^{t+1}=a^tb^{\scriptsize\textcircled{\tiny D}}b^{t+1}=a^tb^t=(ab)^t;\\
&ab(b^{\scriptsize\textcircled{\tiny D}}a^{\scriptsize\textcircled{\tiny D}})^2=ab(b^{\scriptsize\textcircled{\tiny D}})^2(a^{\scriptsize\textcircled{\tiny D}})^2
=ab^{\scriptsize\textcircled{\tiny D}}(a^{\scriptsize\textcircled{\tiny D}})^2 =b^{\scriptsize\textcircled{\tiny D}}a(a^{\scriptsize\textcircled{\tiny D}})^2 =b^{\scriptsize\textcircled{\tiny D}}a^{\scriptsize\textcircled{\tiny D}};~~~~~\\
&(abb^{\scriptsize\textcircled{\tiny D}}a^{\scriptsize\textcircled{\tiny D}})^{*}
=(aa^{\scriptsize\textcircled{\tiny D}}bb^{\scriptsize\textcircled{\tiny D}})^{*}
=(bb^{\scriptsize\textcircled{\tiny D}})^{*}(aa^{\scriptsize\textcircled{\tiny D}})^{*}
=bb^{\scriptsize\textcircled{\tiny D}}aa^{\scriptsize\textcircled{\tiny D}}
=abb^{\scriptsize\textcircled{\tiny D}}a^{\scriptsize\textcircled{\tiny D}}.
\end{aligned}
\end{equation*}
Thus $(ab)^{\scriptsize\textcircled{\tiny D}}
=b^{\scriptsize\textcircled{\tiny D}}a^{\scriptsize\textcircled{\tiny D}}
=a^{\scriptsize\textcircled{\tiny D}}b^{\scriptsize\textcircled{\tiny D}}$.
\end{proof}
\vspace{4mm}

Next, we explore the pseudo core invertibility of the sum of two pseudo core invertible elements.
\begin{thm}~Let $a, b\in R^{\scriptsize\textcircled{\tiny D}}$ with $ab=ba=0,~ a^{*}b=0$. Then $a+b\in R^{\scriptsize\textcircled{\tiny D}}$ with $(a+b)^{\scriptsize\textcircled{\tiny D}}=a^{\scriptsize\textcircled{\tiny D}}+b^{\scriptsize\textcircled{\tiny D}}$.
\end{thm}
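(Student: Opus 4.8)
The plan is to guess the pseudo core inverse explicitly, namely to set $x = a^{\scriptsize\textcircled{\tiny D}} + b^{\scriptsize\textcircled{\tiny D}}$, verify that it satisfies the three defining equations $(\mathrm I)$--$(\mathrm{III})$ of Definition \ref{1.1}, and then invoke the uniqueness established in Theorem \ref{1} to conclude $(a+b)^{\scriptsize\textcircled{\tiny D}} = x$. Throughout I write $p = a^{\scriptsize\textcircled{\tiny D}}$, $q = b^{\scriptsize\textcircled{\tiny D}}$ and put $m = \max\{I(a), I(b)\}$; from the definition together with Lemma \ref{2.1} I will freely use $ap^2 = p$, $pap = p$, $(ap)^* = ap$, and $pa^{k+1} = a^k$ for every $k \ge I(a)$, and symmetrically for $b, q$.

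The heart of the proof is a short collection of cross-term identities asserting that every mixed product vanishes. First, since $p = ap^2 \in aR$ and $q = bq^2 \in bR$, the hypotheses $ba = 0$ and $ab = 0$ give at once $bp = (ba)p^2 = 0$ and $aq = (ab)q^2 = 0$. Next, equation $(\mathrm{III})$ lets me rewrite $ap = (ap)^* = p^*a^*$ and $bq = q^*b^*$; feeding in $a^*b = 0$ and its adjoint $b^*a = 0$ yields $apb = p^*(a^*b) = 0$ and $bqa = q^*(b^*a) = 0$. Finally, using $(\mathrm{III})$ a second time through $p = pap = p(ap) = pp^*a^*$ places $p \in Ra^*$, whence $pb = pp^*(a^*b) = 0$; symmetrically $q = qq^*b^* \in Rb^*$ gives $qa = qq^*(b^*a) = 0$.

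With these identities the three equations follow by direct computation. An immediate induction on $ab = ba = 0$ shows $(a+b)^k = a^k + b^k$ for all $k \ge 1$. For $(\mathrm{III})$ I compute $(a+b)x = ap + aq + bp + bq = ap + bq$ using $aq = bp = 0$, and this is self-adjoint because $(ap)^* = ap$ and $(bq)^* = bq$. For $(\mathrm{II})$ I expand $(a+b)x^2 = \big((a+b)x\big)x = (ap+bq)(p+q) = ap^2 + apq + bqp + bq^2$; the middle terms vanish since $apq = (apb)q^2 = 0$ and $bqp = (bqa)p^2 = 0$ (writing $q = bq^2$, $p = ap^2$), leaving $ap^2 + bq^2 = p + q = x$. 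For $(\mathrm{I})$, since $m \ge I(a), I(b)$ I have $pa^{m+1} = a^m$ and $qb^{m+1} = b^m$, so $x(a+b)^{m+1} = (p+q)(a^{m+1}+b^{m+1}) = a^m + b^m = (a+b)^m$ once the cross terms $pb^{m+1} = (pb)b^m = 0$ and $qa^{m+1} = (qa)a^m = 0$ are discarded. Uniqueness then identifies $x$ as $(a+b)^{\scriptsize\textcircled{\tiny D}}$.

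The only genuinely nonroutine step is obtaining the \emph{reversed} annihilations $pb = 0$ and $qa = 0$, rather than merely $bp = 0$ and $aq = 0$. The hypothesis $a^*b = 0$ naturally controls products carrying an $a^*$ on the left, so the crucial observation is that self-adjointness of $ap$ forces $a^{\scriptsize\textcircled{\tiny D}} = p \in Ra^*$ via $p = pp^*a^*$; this is exactly what converts $a^*b = 0$ into $pb = 0$. Thus equation $(\mathrm{III})$ must be used twice, and this double use is what compensates for the asymmetry between the two-sided hypothesis $ab = ba = 0$ and the single Hermitian-type hypothesis $a^*b = 0$. I expect everything else to be routine algebra.
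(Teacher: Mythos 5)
Your proof is correct, and it takes a genuinely different route from the paper's. Both arguments rest on the same collection of cross-term annihilations, and in particular on the same key trick you flag as nonroutine: writing $a^{\scriptsize\textcircled{\tiny D}} = a^{\scriptsize\textcircled{\tiny D}}(a^{\scriptsize\textcircled{\tiny D}})^*a^*$ (via $a^{\scriptsize\textcircled{\tiny D}}aa^{\scriptsize\textcircled{\tiny D}}=a^{\scriptsize\textcircled{\tiny D}}$ and self-adjointness of $aa^{\scriptsize\textcircled{\tiny D}}$) so that $a^*b=0$ yields $a^{\scriptsize\textcircled{\tiny D}}b=0$; the paper performs exactly this computation. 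The difference is in how existence and the formula are concluded. You verify the three defining equations $(\mathrm I)$--$(\mathrm{III})$ directly for the candidate $x=a^{\scriptsize\textcircled{\tiny D}}+b^{\scriptsize\textcircled{\tiny D}}$ and invoke uniqueness, so your proof needs only Definition 1.1, Lemma 2.1, and Theorem 2.2. The paper instead routes the existence through its Theorem 2.3: it uses the known additive law $(a+b)^D=a^D+b^D$ for Drazin inverses under $ab=ba=0$ (asserted without proof), shows that $(a^{\scriptsize\textcircled{\tiny D}})^m+(b^{\scriptsize\textcircled{\tiny D}})^m$ is a $\{1,3\}$-inverse of $(a+b)^m$ for $m=\max\{I(a),I(b)\}$, and then reads off $(a+b)^{\scriptsize\textcircled{\tiny D}}=(a+b)^D(a+b)^m((a+b)^m)^{(1,3)}=a^{\scriptsize\textcircled{\tiny D}}+b^{\scriptsize\textcircled{\tiny D}}$ from the expression in Theorem 2.3. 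Your version is more self-contained and elementary, avoiding both the Drazin additive result and Theorem 2.3, at the cost of checking equation $(\mathrm{II})$ by hand (where the mixed terms $a^{\scriptsize\textcircled{\tiny D}}b^{\scriptsize\textcircled{\tiny D}}$-type products must be killed); the paper's version is shorter modulo its imported machinery and illustrates how its characterization theorem is meant to be used.
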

\begin{proof}~Since $a, b\in R^{\scriptsize\textcircled{\tiny D}}$, by Lemma 2.1, they are Drazin invertible, and
$(a+b)^D=a^D+b^D$ under the condition $ab=ba=0$.

Again by the hypothesis $ab=ba=0,~ a^{*}b=0$, we find
\begin{equation*}
\begin{aligned}
&ab^{\scriptsize\textcircled{\tiny D}}=ab(b^{\scriptsize\textcircled{\tiny D}})^2=0,~ba^{\scriptsize\textcircled{\tiny D}}=ba(a^{\scriptsize\textcircled{\tiny D}})^2=0,\\
&b^{\scriptsize\textcircled{\tiny D}}a=b^{\scriptsize\textcircled{\tiny D}}(b^{\scriptsize\textcircled{\tiny D}})^{*}b^{*}a=0,~
a^{\scriptsize\textcircled{\tiny D}}b=a^{\scriptsize\textcircled{\tiny D}}(a^{\scriptsize\textcircled{\tiny D}})^{*}a^{*}b=0,\\
&a^{\scriptsize\textcircled{\tiny D}}b^{\scriptsize\textcircled{\tiny D}}
=a^{\scriptsize\textcircled{\tiny D}}(a^{\scriptsize\textcircled{\tiny D}})^{*}a^{*}b(b^{\scriptsize\textcircled{\tiny D}})^{2}=0,~
b^{\scriptsize\textcircled{\tiny D}}a^{\scriptsize\textcircled{\tiny D}}
=b^{\scriptsize\textcircled{\tiny D}}(b^{\scriptsize\textcircled{\tiny D}})^{*}b^{*}a(a^{\scriptsize\textcircled{\tiny D}})^{2}=0.
\end{aligned}
\end{equation*}

Let $m=$max$\{I(a), I(b)\}$, then $a^m(a^{\scriptsize\textcircled{\tiny D}})^ma^m=a^m$ and
$b^m(b^{\scriptsize\textcircled{\tiny D}})^mb^m=b^m$, \\so
\begin{equation*}
\begin{aligned}
(a+b)^m((a^{\scriptsize\textcircled{\tiny D}})^m+(b^{\scriptsize\textcircled{\tiny D}})^m)
&=(a^m+b^m)((a^{\scriptsize\textcircled{\tiny D}})^m+(b^{\scriptsize\textcircled{\tiny D}})^m)
~~~~~~~~~~~~~~~~~~~~~~~~~~\\
&=a^m(a^{\scriptsize\textcircled{\tiny D}})^m
+b^m(b^{\scriptsize\textcircled{\tiny D}})^m\\
&=aa^{\scriptsize\textcircled{\tiny D}}+bb^{\scriptsize\textcircled{\tiny D}}.
\end{aligned}
\end{equation*}
$\mbox{Then,}~((a+b)^m((a^{\scriptsize\textcircled{\tiny D}})^m+(b^{\scriptsize\textcircled{\tiny D}})^m))^{*}
=(a+b)^m((a^{\scriptsize\textcircled{\tiny D}})^m+(b^{\scriptsize\textcircled{\tiny D}})^m).~~~~~~
$
\begin{equation*}
\begin{aligned}
\mbox{Further,~}(a+b)^m((a^{\scriptsize\textcircled{\tiny D}})^m+(b^{\scriptsize\textcircled{\tiny D}})^m)(a+b)^m
&=(a^m(a^{\scriptsize\textcircled{\tiny D}})^m+b^m(b^{\scriptsize\textcircled{\tiny D}})^m)(a^m+b^m)~~~~~~~~~~~~~~~~~~~~~~~~~~~~~~~~~~~~~~~~~~~~~~~~~~~~~~\\
&=a^m(a^{\scriptsize\textcircled{\tiny D}})^ma^m+b^m(b^{\scriptsize\textcircled{\tiny D}})^mb^m\\
&=a^m+b^m=(a+b)^m.
\end{aligned}
\end{equation*}
Hence $(a^{\scriptsize\textcircled{\tiny D}})^m+(b^{\scriptsize\textcircled{\tiny D}})^m$ is a \{1,3\}-inverse of $(a+b)^m$.\\
\begin{equation*}
\begin{aligned}
\mbox{Therefore,~we~get}~(a+b)^{\scriptsize\textcircled{\tiny D}}
=&~(a+b)^D(a+b)^m((a+b)^m)^{(1,3)}~~~~~~~~~~~~~~~~~~~~~~~~~~~~~~~~~~~~~~~~~~~\\
=&~(a^D+b^D)(a^m+b^m)((a^{\scriptsize\textcircled{\tiny D}})^m+(b^{\scriptsize\textcircled{\tiny D}})^m)~~~~~~~~~~~~~~~~~~~~~~~\\
=&~a^Da^m(a^m)^{(1,3)}+b^Db^m(b^m)^{(1,3)}\\
=&a^{\scriptsize\textcircled{\tiny D}}+b^{\scriptsize\textcircled{\tiny D}}.
\end{aligned}
\end{equation*}
\end{proof}

\begin{rem}
 It is noteworthy that condition $ab=0,~a^{*}b=0$ (without $ba=0$) is not sufficient to show the pseudo core invertibility of $a+b$ although both $a$ and $b$ are pseudo core invertible.

For example:~by setting $R=\mathbb{C}^{2\times 2}$ with transpose as its involution,\\
$$a=\begin{pmatrix}
i & 0\\
0 & 0
\end{pmatrix},~~~~b=\begin{pmatrix}
                  0 & 0\\
                  -1 & 0
                  \end{pmatrix},$$
we have~ $ab=a^{*}b=0,~\text{but}~ba\neq0$.

Observe that $a^{\#}=-a$ and~$aa^{(1,3)}=\begin{pmatrix}
                        1 & 0\\
                        0 & 0
                        \end{pmatrix}$,~which imply~$$a^{\scriptsize\textcircled{\tiny D}}=a^{\tiny\textcircled{\tiny \#}}=a^{\#}aa^{(1,3)}
                                     =\begin{pmatrix}
-i & 0\\
0 & 0
\end{pmatrix}.$$

It is obvious that $b^{\scriptsize\textcircled{\tiny D}}=0$.

As for $a+b=\begin{pmatrix}
     i & 0\\
    -1 & 0
    \end{pmatrix}$, by calculation, we find that neither $a+b$ nor $(a+b)^2$ has any \{1,3\}-inverse. Since
$(a+b)^m=\begin{cases}
         (-1)^{\frac{m-1}{2}}(a+b)& m ~\text{is odd} \\
         (-1)^{\frac{m}{2}+1}(a+b)^2 & m ~\text{is even}
\end{cases}$, we conclude that
$(a+b)^m$ has no \{1,3\}-inverse for arbitrary positive integer $m$.

Hence $a+b$ is not pseudo core invertible.
\end{rem}
\vspace{4mm}

\section{Computations for the pseudo core inverses of complex matrices}
Lastly, one may take an interest in how to compute the pseudo core inverse of a square complex matrix. Here we exhibit two methods.

For any matrix $A \in \mathbb{C}^{n \times n}$ of rank $r>0$ the Hartwig-Spindelb\"{o}ck decomposition \cite{D1984} is given by
\begin{equation}
\begin{aligned}
A=U\begin{pmatrix}
     \Sigma K & \Sigma L\\
     0        &  0
     \end{pmatrix}U^{*},
\end{aligned}
\end{equation}
where $U \in \mathbb{C}^{n \times n}$ is unitary, $\Sigma=$diag$(\sigma_1I_{r_1}, \sigma_2I_{r_2}, \cdots, \sigma_tI_{r_t})$ is a diagonal matrix, the diagonal entries $\sigma_i$ being singular values of $A$, $\sigma_1 > \sigma_2 > \cdots > \sigma_t > 0,~ r_1+r_2+\cdots+r_t=r$ and $K\in \mathbb{C}^{r \times r}, ~L\in \mathbb{C}^{r \times {(n-r)}}$ satisfying $KK^{*}+LL^{*}=I_r$.

\begin{thm}~Let $A\in\mathbb{C}^{n \times n}$ be of the form $(1)$. Then $A^{\scriptsize\textcircled{\tiny D}}
=U\begin{pmatrix}
(\Sigma K)^{\scriptsize\textcircled{\tiny D}} & 0\\
0  & 0
      \end{pmatrix}U^{*}.$
\end{thm}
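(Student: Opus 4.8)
The strategy is to verify directly that the candidate element $x=U\begin{pmatrix}(\Sigma K)^{\scriptsize\textcircled{\tiny D}} & 0\\ 0 & 0\end{pmatrix}U^{*}$ satisfies the three defining equations $(\mathrm{I})$, $(\mathrm{II})$, $(\mathrm{III})$ of the pseudo core inverse, and then invoke the uniqueness guaranteed by Theorem \ref{1}. Because of the unitary conjugation $U(\cdot)U^{*}$, every computation reduces to a computation with the inner block matrices, so I would first record the effect of conjugation: for any block matrices $P,Q$ one has $(UPU^{*})(UQU^{*})=U(PQ)U^{*}$ and $(UPU^{*})^{*}=U(P^{*})U^{*}$, since $U$ is unitary. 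Thus equations $(\mathrm{I})$--$(\mathrm{III})$ for $A$ and $x$ are equivalent to the same equations for the inner $2\times2$ block matrices.

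\medskip

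The key computational observation is to understand the powers of the inner matrix $B=\begin{pmatrix}\Sigma K & \Sigma L\\ 0 & 0\end{pmatrix}$. A straightforward induction shows that for every $j\geq1$,
\begin{equation*}
B^{j}=\begin{pmatrix}(\Sigma K)^{j} & (\Sigma K)^{j-1}\Sigma L\\ 0 & 0\end{pmatrix}.
\end{equation*}
Writing $c=(\Sigma K)^{\scriptsize\textcircled{\tiny D}}$ and letting $m$ be the pseudo core index of $\Sigma K$, so that $c(\Sigma K)^{m+1}=(\Sigma K)^{m}$, $(\Sigma K)c^{2}=c$ and $((\Sigma K)c)^{*}=(\Sigma K)c$, I would compute the three products built from $B$ and $y=\begin{pmatrix}c & 0\\ 0 & 0\end{pmatrix}$. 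For equation $(\mathrm{III})$ note that $By=\begin{pmatrix}(\Sigma K)c & 0\\ 0 & 0\end{pmatrix}$, whose Hermitian symmetry follows immediately from $((\Sigma K)c)^{*}=(\Sigma K)c$. For equation $(\mathrm{II})$, $By^{2}=\begin{pmatrix}(\Sigma K)c^{2} & 0\\ 0 & 0\end{pmatrix}=\begin{pmatrix}c & 0\\ 0 & 0\end{pmatrix}=y$ is direct. For equation $(\mathrm{I})$, using the power formula above, $yB^{m+1}=\begin{pmatrix}c(\Sigma K)^{m+1} & c(\Sigma K)^{m}\Sigma L\\ 0 & 0\end{pmatrix}$, and I would simplify each block using $c(\Sigma K)^{m+1}=(\Sigma K)^{m}$ together with $c(\Sigma K)^{m}=(\Sigma K)c^{2}(\Sigma K)^{m}=\cdots$ collapsed via Lemma \ref{2.1}(1) to match $(\Sigma K)^{m-1}\Sigma L$, so that $yB^{m+1}=B^{m}$.

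\medskip

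The main obstacle I anticipate is the upper-right block of equation $(\mathrm{I})$: unlike the diagonal block, it mixes the $\Sigma L$ term with powers of $\Sigma K$, and showing $c(\Sigma K)^{m}\Sigma L=(\Sigma K)^{m-1}\Sigma L$ requires the identity $c(\Sigma K)^{m}=(\Sigma K)^{m-1}$ in the corner block, which is exactly the relation $(\Sigma K)^{\scriptsize\textcircled{\tiny D}}(\Sigma K)^{m}=(\Sigma K)^{m-1}$ that holds because $(\Sigma K)c=(\Sigma K)^{m}c^{m}$ (Lemma \ref{2.1}(1)) and $c(\Sigma K)^{m}x^{\cdots}$ telescopes. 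Concretely, from $a^{D}=c^{m+1}(\Sigma K)^{m}$ and $aa^{D}=a^{D}a$ in Lemma \ref{2.1}(4) applied to $\Sigma K$, one gets $c(\Sigma K)^{m}=(\Sigma K)^{m}c=(\Sigma K)^{m-1}(\Sigma K)c=(\Sigma K)^{m-1}$ after absorbing the idempotent $(\Sigma K)c$ into the range of $(\Sigma K)^{m}$; I would make this precise using the fact that $(\Sigma K)^{j}(\Sigma K)c=(\Sigma K)^{j}$ for $j\geq m$, which follows from Lemma \ref{2.1}(3). Once both blocks check out, all three equations hold for $B$ and $y$, hence for $A$ and $x$ after conjugating back by $U$, and Theorem \ref{1} identifies $x$ as the unique pseudo core inverse $A^{\scriptsize\textcircled{\tiny D}}$.
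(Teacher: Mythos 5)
Your overall plan is sound and genuinely different from the paper's proof: the paper never verifies the defining equations directly, but instead computes $A^{\scriptsize\textcircled{\tiny D}}=A^DA^m(A^m)^{\dag}$ via Theorem 2.3, using the known block form of $A^D$ and a lengthy partitioned Moore--Penrose computation of $A^m(A^m)^{\dag}$. Your route (verify $(\mathrm{I})$--$(\mathrm{III})$ for the candidate, then invoke uniqueness, Theorem 2.2) avoids all of that, and your reduction by unitary conjugation, the power formula for $B=\begin{pmatrix}\Sigma K&\Sigma L\\0&0\end{pmatrix}$, and the verification of $(\mathrm{II})$ and $(\mathrm{III})$ are all correct. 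However, the step you yourself single out as the main obstacle --- the upper-right block of equation $(\mathrm{I})$ --- rests on a false identity, so as written the proof has a genuine gap.

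With $c=(\Sigma K)^{\scriptsize\textcircled{\tiny D}}$ and $m=I(\Sigma K)$, you claim $c(\Sigma K)^{m}=(\Sigma K)^{m-1}$. This cannot hold in general: equations $(\mathrm{II})$ and $(\mathrm{III})$ do not involve the exponent, so this identity would say that $c$ satisfies the definition of the pseudo core inverse of $\Sigma K$ with exponent $m-1$, contradicting the minimality of $m$ when $m\geq 2$, and forcing $\Sigma K$ to be invertible when $m=1$. A concrete counterexample: $A=\begin{pmatrix}0&1\\0&0\end{pmatrix}$ has the form $(1)$ with $U=I$, $\Sigma=1$, $K=0$, $L=1$; then $c=0$, $m=1$, and $c(\Sigma K)^{m}\Sigma L=0\neq 1=(\Sigma K)^{m-1}\Sigma L$, so equation $(\mathrm{I})$ with exponent $m$ genuinely fails ($yB^{2}=0\neq B$). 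The auxiliary claim $(\Sigma K)^{j}(\Sigma K)c=(\Sigma K)^{j}$ for $j\geq m$ is likewise false and is not what Lemma 2.1(3) says: that lemma has $a^{k}$ to the \emph{right} of $x^{k}$, whereas your claim is of the form $a^{j}(ax)=a^{j}$, which already fails for $a=\begin{pmatrix}1&1\\0&0\end{pmatrix}$, $j=m=1$ (indeed $a^{2}a^{\tiny\textcircled{\tiny \#}}\neq a$ is exactly the phenomenon recorded in Remark 2.15). Fortunately the repair is one line: equation $(\mathrm{I})$ only requires \emph{some} positive integer, so verify it with exponent $m+1$ rather than $m$. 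Both blocks of $yB^{m+2}=B^{m+1}$ then follow at once from the single defining relation $c(\Sigma K)^{m+1}=(\Sigma K)^{m}$, namely $c(\Sigma K)^{m+2}=(\Sigma K)^{m+1}$ and $c(\Sigma K)^{m+1}\Sigma L=(\Sigma K)^{m}\Sigma L$. With that change your argument is complete, and it is substantially shorter than the paper's.
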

\begin{proof}
Suppose~$m$ is the index of $A$, then $I(A)=m$.
Suppose $A^{\scriptsize\textcircled{\tiny D}}=X$, then we have
$$XA^{m+1}=A^m,~~ AX^2=X,~~ (AX)^{*}=AX.$$
The equality $XA^{m+1}=A^m$ ensures that
there exists $X=\begin{pmatrix}
      X_1 & X_2\\
      X_3 & X_4
      \end{pmatrix} \in \mathbb{C}^{n\times n}$ such that
$$\begin{pmatrix}
      X_1 & X_2\\
      X_3 & X_4
      \end{pmatrix}\begin{pmatrix}
     (\Sigma K)^{m+1} & (\Sigma K)^{m}\Sigma L\\
     0        &  0
     \end{pmatrix}=\begin{pmatrix}
     (\Sigma K)^m & (\Sigma K)^{m-1}\Sigma L\\
     0        &  0
     \end{pmatrix}.$$
Then we have $$\begin{cases}(\Sigma K)^m &=X_1(\Sigma K)^{m+1},\\
(\Sigma K)^{m-1}\Sigma L &=X_1(\Sigma K)^{m}\Sigma L.
\end{cases}$$
$$\text{So}~(\Sigma K)^{m-1}\Sigma(K, L)
=X_1(\Sigma K)^m\Sigma(K, L).~~~~~~~~~~~~~~~~~~~~~~~~~~~~~~~~~~~~~~~~~~~~~~~~~~~~~~~~~~~~~~~~~~~~~~$$
Right multiply the above equality by $\begin{pmatrix}
                                      K^{*}\\
                                      L^{*}
                                     \end{pmatrix}$, then
$$(\Sigma K)^{m-1}\Sigma \begin{pmatrix}
                               K & L
                          \end{pmatrix}\begin{pmatrix}
                                      K^{*}\\
                                      L^{*}
                                     \end{pmatrix}=X_1(\Sigma K)^m\Sigma\begin{pmatrix}
                                                                   K & L
                                                                  \end{pmatrix}\begin{pmatrix}
                                                                                K^{*}\\
                                                                                L^{*}
                                                                               \end{pmatrix}.$$
Since $KK^{*}+LL^{*}=I$, we obtain $(\Sigma K)^{m-1}\Sigma=X_1(\Sigma K)^m\Sigma.$\\
Then~$(\Sigma K)^{m-1}=X_1(\Sigma K)^m.$\\
Thus~rank $((\Sigma K)^{m-1})=\text{rank~}((\Sigma K)^{m})$, which implies that
$I(\Sigma K)\leq m-1$.\\
Therefore
\begin{equation}
(\Sigma K)^{\scriptsize\textcircled{\tiny D}}=(\Sigma K)^D(\Sigma K)^{m-1}((\Sigma K)^{m-1})^{\dag}.
\end{equation}

The Drazin inverse of $A$ is given as \cite{D22014}
$$A^D=U\begin{pmatrix}
      (\Sigma K)^D & ((\Sigma K)^D)^2 \Sigma L\\
      0            & 0
      \end{pmatrix}U^{*}.$$
Since~$A^m=\left(U\begin{pmatrix}
     \Sigma K & \Sigma L\\
     0        &  0
     \end{pmatrix}U^{*}\right)^m=U\begin{pmatrix}
     \Sigma K & \Sigma L\\
     0        &  0
     \end{pmatrix}^mU^{*}=U\begin{pmatrix}
     (\Sigma K)^m & (\Sigma K)^{m-1}\Sigma L\\
     0        &  0
     \end{pmatrix}U^{*}$, we have~
$(A^m)^{\dag}=U\begin{pmatrix}
     (\Sigma K)^m & (\Sigma K)^{m-1}\Sigma L\\
     0        &  0
     \end{pmatrix}^{\dag}U^{*}.~~~~~~~~~~~~~~~~~~~\\
\mbox{Thus~~} A^m(A^m)^{\dag}=U\begin{pmatrix}
     (\Sigma K)^m & (\Sigma K)^{m-1}\Sigma L\\
     0        &  0
     \end{pmatrix}\begin{pmatrix}
     (\Sigma K)^m & (\Sigma K)^{m-1}\Sigma L\\
     0        &  0
     \end{pmatrix}^{\dag}U^{*}\\
\indent\quad\indent\indent\indent\!=U\begin{pmatrix}
     B & C\\
     0 & 0
     \end{pmatrix}\begin{pmatrix}
     B & C\\
     0 & 0
     \end{pmatrix}^{\dag}U^{*},~
\mbox{where~} B=(\Sigma K)^m,~C=(\Sigma K)^{m-1}\Sigma L.$\\
From \cite{D1975}, we know $\begin{pmatrix}
     0 & 0\\
     B & C
     \end{pmatrix}^{\dag}=\begin{pmatrix}
     0 & B^{*}L^{\dag}\\
     0 & C^{*}L^{\dag}
     \end{pmatrix}$, where $L=BB^{*}+CC^{*}$.

$$\mbox{So}~\begin{pmatrix}
     B & C\\
     0 & 0
     \end{pmatrix}^{\dag}=\left[\begin{pmatrix}
     0 & 1\\
     1 & 0
     \end{pmatrix}\begin{pmatrix}
     0 & 0\\
     B & C
     \end{pmatrix}\right]^{\dag}=\begin{pmatrix}
     0 & 0\\
     B & C
     \end{pmatrix}^{\dag}\begin{pmatrix}
     0 & 1\\
     1 & 0
     \end{pmatrix}=\begin{pmatrix}
    B^{*}L^{\dag} &0\\
    C^{*}L^{\dag} &0
     \end{pmatrix}.~~~~~~~~~~~~~~~~~~~~~~~~~~~~~~~$$
Then
\begin{equation*}
\begin{aligned}
\begin{pmatrix}
     B & C\\
     0 & 0
     \end{pmatrix}\begin{pmatrix}
     B & C\\
     0 & 0
     \end{pmatrix}^{\dag}&=\begin{pmatrix}
     B & C\\
     0 & 0
     \end{pmatrix}\begin{pmatrix}
    B^{*}L^{\dag} &0\\
    C^{*}L^{\dag} &0
     \end{pmatrix}=\begin{pmatrix}
    BB^{*}L^{\dag}+CC^{*}L^{\dag} &0\\
     0                            &0
     \end{pmatrix}\\
&=\begin{pmatrix}
LL^{\dag} &0\\
0         &0
\end{pmatrix},~\mbox{where}~L=BB^{*}+CC^{*}.
\end{aligned}
\end{equation*}
Therefore $A^m(A^m)^{\dag}=U\begin{pmatrix}
LL^{\dag} &0\\
0         &0
\end{pmatrix}U^{*}$,~where
\begin{equation*}
\begin{aligned}
L&=BB^{*}+CC^{*}\\
&=(\Sigma K)^m((\Sigma K)^m)^{*}+(\Sigma K)^{m-1}\Sigma L((\Sigma K)^{m-1}\Sigma L)^{*}~~~~~~~~~~~~~~~~~~~\\
&=(\Sigma K)^{m-1}[\Sigma K(\Sigma K)^{*}+\Sigma L(\Sigma L)^{*}]((\Sigma K)^{m-1})^{*}\\
&=(\Sigma K)^{m-1}\Sigma\Sigma^{*}((\Sigma K)^{m-1})^{*}~~~~~~~~~~~~~~~~\\
&=(\Sigma K)^{m-1}\Sigma((\Sigma K)^{m-1}\Sigma)^{*}\\
&=TT^{*},~ \mbox{where~} T=(\Sigma K)^{m-1}\Sigma.
\end{aligned}
\end{equation*}
Then $LL^{\dag}=TT^{*}(TT^{*})^{\dag}=TT^{\dag}
=(\Sigma K)^{m-1}\Sigma((\Sigma K)^{m-1}\Sigma)^{\dag}\\
\indent\indent\quad~=T_1\Sigma(T_1\Sigma)^{\dag}$,~where~$T_1=(\Sigma K)^{m-1}.$\\
By \cite{D1991}, we get
$$(T_1\Sigma)^{\dag}=(T_1\Sigma)^{*}[(T_1\Sigma(T_1\Sigma)^{*}+I-T_1T_1^{\dag}]^{-1}.$$
So $T_1\Sigma(T_1\Sigma)^{\dag}
=T_1\Sigma(T_1\Sigma)^{*}[(T_1\Sigma(T_1\Sigma)^{*}+I-T_1T_1^{\dag}]^{-1}\\
\indent\indent\indent~~~~ =T_1T_1^{\dag}[T_1\Sigma(T_1\Sigma)^{*}+I-T_1T_1^{\dag}][(T_1\Sigma(T_1\Sigma)^{*}+I-T_1T_1^{\dag}]^{-1}
 =T_1T_1^{\dag}$.\\
Therefore $A^m(A^m)^{\dag}=U\begin{pmatrix}
T_1T_1^{\dag} &0\\
0         &0
\end{pmatrix}U^{*}$, where $T_1=(\Sigma K)^{m-1}$.\\
\mbox{Hence~}
\begin{equation*}
\begin{aligned}
A^{\scriptsize\textcircled{\tiny D}}
&=A^DA^m(A^m)^{\dag}
=U\begin{pmatrix}
      (\Sigma K)^D & ((\Sigma K)^D)^2 \Sigma L\\
      0            & 0
      \end{pmatrix}\begin{pmatrix}
T_1T_1^{\dag} &0\\
0         &0
\end{pmatrix}U^{*}~~~~~~~~~~~~~~~~~~~~~~~~~~~\\
&=U\begin{pmatrix}
      (\Sigma K)^DT_1T_1^{\dag} & 0\\
      0                     & 0
      \end{pmatrix}U^{*}
=U\begin{pmatrix}
      (\Sigma K)^D(\Sigma K)^{m-1}((\Sigma K)^{m-1})^{\dag} & 0\\
      0                     & 0
      \end{pmatrix}U^{*}\\
&\xlongequal{(2)}U\begin{pmatrix}
(\Sigma K)^{\scriptsize\textcircled{\tiny D}} & 0\\
0  & 0
      \end{pmatrix}U^{*}.
\end{aligned}
\end{equation*}
\end{proof}

Let $A\in \mathbb{C}^{n\times n}$ of rank $r>0$, then there exists invertible~$P\in \mathbb{C}^{n\times n}$ such that
\begin{equation}
A=P^{-1}\begin{pmatrix}
          D &O\\
          O &N
         \end{pmatrix}P,
\end{equation}
$\mbox{where}~D\in \mathbb{C}^{r\times r}~\mbox{is~invertible}$ and$~N\in \mathbb{C}^{(n-r)\times (n-r)}\mbox{~is~nilpotent}.$~
Suppose $P=\begin{pmatrix}
                                    P_1 \\
                                    P_2
                                    \end{pmatrix}$ and $P^{-1}=(Q_1, Q_2)$.
\begin{thm}~Let $A\in\mathbb{C}^{n \times n}$ be of the form $(3)$. Then $A^{\scriptsize\textcircled{\tiny D}}=Q_1D^{-1}({Q_1}^{*} Q_1)^{-1}{Q_1}^{*}$.
\end{thm}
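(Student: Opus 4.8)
The plan is to exploit the block structure in $(3)$ together with the fact, already established in Theorem 2.3, that $A^{\scriptsize\textcircled{\tiny D}}=A^D A^m (A^m)^{(1,3)}$ whenever $A$ is pseudo core invertible with $m\geq i(A)$. Since every square complex matrix is Drazin invertible and hence pseudo core invertible (the relevant $\{1,3\}$-inverses always exist over $\mathbb{C}$), I would first assemble the two ingredients $A^D$ and $A^m(A^m)^{(1,3)}$ in terms of the decomposition $A=P^{-1}\operatorname{diag}(D,N)P$, and then multiply.

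First I would compute the Drazin inverse. Because $D$ is invertible and $N$ is nilpotent, the core-nilpotent splitting gives $A^D=P^{-1}\operatorname{diag}(D^{-1},O)P$ directly; writing $P=\left(\begin{smallmatrix}P_1\\ P_2\end{smallmatrix}\right)$ and $P^{-1}=(Q_1,Q_2)$, this becomes $A^D=Q_1 D^{-1}P_1$. Similarly, for any $m\geq i(A)$ one has $N^m=O$, so $A^m=P^{-1}\operatorname{diag}(D^m,O)P=Q_1 D^m P_1$. The key structural observation is that $A^m$ has column space exactly $\operatorname{range}(Q_1)$ and factors through the invertible $D^m$, so its range is the fixed subspace $Q_1\mathbb{C}^r$ independent of $m$.

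The heart of the argument is to identify the Hermitian projector $A^m(A^m)^{(1,3)}$. A $\{1,3\}$-inverse $G$ of $A^m$ satisfies $A^mGA^m=A^m$ with $(A^mG)^*=A^mG$, so $A^mG$ is the orthogonal projector onto $\mathcal{C}(A^m)=\mathcal{C}(Q_1)=\operatorname{range}(Q_1)$. That orthogonal projector has the explicit closed form $Q_1(Q_1^*Q_1)^{-1}Q_1^*$, where $Q_1^*Q_1$ is invertible because $Q_1$ (being a block of the invertible matrix $P^{-1}$) has full column rank $r$. Thus $A^m(A^m)^{(1,3)}=Q_1(Q_1^*Q_1)^{-1}Q_1^*$. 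I expect this step—pinning down that the Hermitian idempotent is precisely the orthogonal projector onto $\mathcal{C}(Q_1)$ and writing it in the $Q_1(Q_1^*Q_1)^{-1}Q_1^*$ form—to be the main obstacle, though it is standard linear algebra once the range identification is made.

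Finally I would multiply the two pieces and simplify. Using $A^D=Q_1 D^{-1}P_1$ and $A^m(A^m)^{(1,3)}=Q_1(Q_1^*Q_1)^{-1}Q_1^*$, we get
\begin{equation*}
A^{\scriptsize\textcircled{\tiny D}}=A^D A^m(A^m)^{(1,3)}=Q_1 D^{-1}P_1 Q_1 (Q_1^*Q_1)^{-1}Q_1^*.
\end{equation*}
The simplification hinges on the block relation $PP^{-1}=I$, which gives $P_1 Q_1=I_r$ (the upper-left block of $\operatorname{diag}(I_r,I_{n-r})$). Substituting $P_1Q_1=I_r$ collapses the middle factors and yields exactly
\begin{equation*}
A^{\scriptsize\textcircled{\tiny D}}=Q_1 D^{-1}(Q_1^*Q_1)^{-1}Q_1^*,
\end{equation*}
as claimed. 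The whole computation is then just bookkeeping with the blocks of $P$ and $P^{-1}$ plus the one nontrivial fact that the Hermitian $\{1,3\}$-product is the orthogonal range projector.
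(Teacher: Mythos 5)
Your proposal is correct and takes essentially the same route as the paper's proof: both reduce via Theorem 2.3 to computing $A^D=Q_1D^{-1}P_1$ and $A^m=Q_1D^mP_1$, identify $A^m(A^m)^{(1,3)}=Q_1(Q_1^*Q_1)^{-1}Q_1^*$, and then collapse the product using $P_1Q_1=I_r$. The only difference is cosmetic: the paper exhibits the explicit $\{1,3\}$-inverse $B=Q_1(D^{-1})^m(Q_1^*Q_1)^{-1}Q_1^*$ and verifies the two defining equations directly, whereas you identify the Hermitian idempotent $A^m(A^m)^{(1,3)}$ abstractly as the orthogonal projector onto $\mathcal{C}(A^m)=\mathcal{C}(Q_1)$; the computations coincide.
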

\begin{proof}~
From $P=\begin{pmatrix}
                                    P_1 \\
                                    P_2
                                    \end{pmatrix}$,~$P^{-1}=(Q_1, Q_2)$ and $A=P^{-1}\begin{pmatrix}
          D &O\\
          O &N
         \end{pmatrix}P$, it follows that
$$A^D=P^{-1}\begin{pmatrix}
                                    D^{-1} &O\\
                                    O      &O
                                     \end{pmatrix}P=Q_1D^{-1}P_1~\text{and}~P_1Q_1=I.$$
Let $m$ be the positive integer such that $N^m=0$, then
$$A^m=(Q_1, Q_2)\begin{pmatrix}
                                    D^{m} &O\\
                                    O      &O
                                     \end{pmatrix}\begin{pmatrix}
                                                     P_1 \\
                                                     P_2
                                                   \end{pmatrix}=Q_1D^mP_1.$$
Setting $B=Q_1(D^{-1})^m({Q_1}^{*} Q_1)^{-1}{Q_1}^{*}$, we have
 $$A^mB=Q_1D^mP_1Q_1(D^{-1})^m({Q_1}^{*} Q_1)^{-1}{Q_1}^{*}
=Q_1({Q_1}^{*} Q_1)^{-1}{Q_1}^{*}.$$
So $(A^mB)^{*}=A^mB$ and
$A^mBA^m=Q_1({Q_1}^{*} Q_1)^{-1}{Q_1}^{*}Q_1D^mP_1=Q_1D^mP_1=A^m$.
Namely, $B$ is a \{1,3\}-inverse of $A^m$.\\
\mbox{Hence~}
\begin{equation*}
\begin{aligned}
A^{\scriptsize\textcircled{\tiny D}}&=A^DA^m(A^m)^{(1,3)}=Q_1D^{-1}P_1Q_1({Q_1}^{*} Q_1)^{-1}{Q_1}^{*}\\
&=Q_1D^{-1}({Q_1}^{*} Q_1)^{-1}{Q_1}^{*}.
\end{aligned}
\end{equation*}
\end{proof}
\vspace{0.8cm}

\noindent {\large\bf Acknowledgements}\\
This research is supported by the National Natural Science Foundation
of China (No.11371089), the Scientific Innovation Research of College Graduates in Jiangsu Province (No.KYZZ16$\_$0112),
the Natural Science Foundation of Jiangsu Province (No.BK20141327).

\end{document}